\numberwithin{equation}{section}
\newtheorem{theorem}{Theorem}[section]
\newtheorem{corollary}[theorem]{Corollary}
\newtheorem{lemma}[theorem]{Lemma}
\newtheorem{proposition}[theorem]{Proposition}
\newtheorem{definition}[theorem]{Definition}
\newtheorem{rem}[theorem]{Remark}
\newcommand{\R}{\mathbb{R}}
\newcommand{\He}{\mathbb{H}}
\newcommand{\Z}{\mathbb{Z}}
\newcommand{\E}{\mathbb{E}}
\newcommand{\Ind}{\mathbf{1}}
\begin{document}

\title{\Large{Logarithmic Sobolev inequalities for infinite dimensional H\"ormander type generators on the Heisenberg group}
\thanks{
{Supported by EPSRC 
EP/D05379X/1}} 
 }

\author{ J. Inglis , I. Papageorgiou  \\
{\small{Imperial College London}}\\
}

\maketitle

{\small{
\noindent\textbf{Abstract}: 
{\em The Heisenberg group is one of the simplest sub-Riemannian settings in which we can define non-elliptic H\"ormander type generators.  We can then consider coercive inequalities associated to such generators.  
We prove that a certain class of non-trivial Gibbs measures with quadratic interaction potential on an infinite product of Heisenberg groups satisfy logarithmic Sobolev inequalities. 
}}\\


%

\tableofcontents


\section{Introduction}
\label{intro}
Ever since P. Federbush (see \cite{Fed}) and   L. Gross (see \cite{G}) proved that a logarithmic Sobolev inequality is equivalent to hypercontractivity of the associated semigroup (see \cite{G}), these inequalities have been the subject of much research and interest.  They have proved extremely useful as a tool in the control of the rate of convergence to equilibrium of spin systems, and were extensively studied (see for example \cite{B-H1},\cite{G-Z}, \cite{Led}, \cite{O-R}, \cite{S-Z1},\cite{Yo2}, \cite{Ze}).  Up until recently, however, most of the attention has been focused on the case of elliptic generators, for which there are some very powerful methods for proving such inequalities.  Our aim here is to show that a certain class of infinite dimensional measures corresponding to non-elliptic H\"ormander type generators satisfy logarithmic Sobolev inequalities.

One method that exists for proving coercive inequalities such as the logarithmic Sobolev inequality and the spectral gap inequality, as well as gradient bounds (which are closely related) involves showing that the so-called $CD(\rho, \infty)$ condition holds (see \cite{A-B-C-F-G-M-R-S}, \cite{Bakry}).  Indeed, let $L$ be the generator of a Markov semigroup $P_t$, and define the operators

\begin{align*}
\Gamma(f,f) &= \frac{1}{2}\left(L(f^2) - 2fLf\right)\\
\Gamma_2(f,f) &= \frac{1}{2}\left[L\Gamma(f,f) - 2\Gamma(f, Lf)\right].
\end{align*}
We say that the $CD(\rho, \infty)$ holds when there exists $\rho\in\R$ such that
\[
\Gamma_2(f,f) \geq \rho \Gamma(f,f).
\]
When $L$ is elliptic such a condition holds in many situations.  In the case when $M$ is a complete connected Riemannian manifold, and $\nabla$ and $\Delta$ are the standard Riemannian gradient and Laplace-Beltrami operators, taking $L=\Delta$, the condition reads
\[
|\nabla\nabla f|^2 + \mathrm{Ric}(\nabla f, \nabla f) \geq \rho|\nabla f|^2.
\]
This holds for some $\rho\in\R$ when $M$ is compact, or for $\rho =0$ when $M=\R^n$ with the usual metric, since $\mathrm{Ricci}=0$.

However, in this paper we will consider non-elliptic H\"ormander generators. For such generators these methods do not work, since the $CD(\rho, \infty)$ condition does not hold.  Indeed, the Ricci tensor of our generators can be thought of as being $-\infty$ almost everywhere.  

We consider an $N$-dimensional lattice and impose interactions between points in the lattice described by an unbounded quadratic potential.  In the standard case where the underlying space is Euclidean, the $CD(\rho, \infty)$ condition allows us to prove that the finite dimensional measures on the lattice, which depend on the boundary conditions, satisfy logarithmic Sobolev inequalities uniformly on the boundary conditions.  It is then possible to pass to the infinite dimensional measure.  We aim for a comparable result in a more complicated sub-Riemannian setting, using different methods.

In \cite{L-Z} a similar situation is studied, in that the authors consider a system of H\"ormander generators in infinite dimensions and prove logarithmic Sobolev inequalities as well as some ergodicity results.  The main difference between the present set up and their situation is that we consider a non-compact underlying space, namely the Heisenberg group, in which the techniques of \cite{L-Z} cannot be applied.

\section{Logarithmic Sobolev inequalities on the Heisenberg group}
\label{Logarithmic Sobolev inequalities on the Heisenberg group}
We consider the Heisenberg group, $\mathbb{H}$, which can be described as $\mathbb{R}^3$ with the following group operation:
$$x\cdot\tilde{x} = (x_1, x_2, x_3)\cdot(\tilde{x}_1, \tilde{x}_2, \tilde{x}_3) = (x_1 + \tilde{x}_1, x_2 + \tilde{x}_2, x_3 + \tilde{x}_3 + \frac{1}{2}(x_1\tilde{x}_2 - x_2\tilde{x}_1)).$$
$\mathbb{H}$ is a Lie group, and its Lie algebra $\mathfrak{h}$ can be identified with the space of left invariant vector fields on $\mathbb{H}$ in the standard way.  By direct computation we see that this space is spanned by 
\begin{eqnarray}
X_1 &=& \partial_{x_1} - \frac{1}{2}x_2\partial_{x_3} \nonumber \\
X_2 &=& \partial_{x_2} + \frac{1}{2}x_1\partial_{x_3} \nonumber \\
X_3 &=& \partial_{x_3} = [X_1, X_2]. \nonumber
\end{eqnarray}
From this it is clear that $X_1, X_2$ satisfy the H\"ormander condition (i.e. $X_1, X_2$ and their commutator $[X_1, X_2]$ span the tangent space at every point of $\He$).  It is also easy to check that the left invariant Haar measure (which is also the right invariant measure since the group is nilpotent) is the Lebesgue measure $dx$ on $\R^3$. 

$\He$ is naturally equipped with a 1-parameter family of automorphisms $\{\delta_\lambda\}_{\lambda>0}$ defined by
\[
\delta_\lambda(x) := \left(\lambda x_1, \lambda x_2, \lambda^2 x_3\right).
\]
$\{\delta_\lambda\}_{\lambda>0}$ is called a family of \textit{dilations}.  Thus $\He$ is an example of a homogeneous Carnot group (see \cite{B-L-U} for an extensive study of such groups).
 
On $C^\infty_0(\He)$, define the \textit{sub-gradient} to be the operator given by
$$\nabla := (X_1,X_2)$$ 
and the \textit{sub-Laplacian} to be the second order operator given by 
$$\Delta := X_1^2 + X_2^2.$$
$\nabla$ can be treated as a closed operator from $L^2(\mathbb{H}, dx)$ to $L^2(\mathbb{H}; \mathbb{R}^2, dx)$.  Similarly, since $\Delta$ is densely defined and symmetric in $L^2(\He, dx)$, we may treat $\Delta$ as a closed self-adjoint operator on $L^2(\He, dx)$ by taking the Friedrich extension.  

We introduce the logarithmic Sobolev inequality on $\He$ in the following way.

\begin{definition}
Let $q\in(1,2]$, and let $\mu$ be a probability measure on $\He$.  $\mu$ is said to satisfy a \textit{$q$-logarithmic Sobolev inequality} $(LS_q)$ on $\He$ if there exists a constant $c>0$ such that for all smooth functions $f: \He\to \R$
\begin{equation}
\label{LSq}
\mu\left(|f|^q\log\frac{|f|^q}{\mu|f|^q}\right) \leq c\mu\left(|\nabla f|^q\right)
\end{equation}
where $\nabla$ is the sub-gradient on $\He$.
\end{definition}

\begin{rem}
The $(LS_q)$ was introduced in \cite{B-L} and further studied in \cite{B-Z}, as a variation of the more standard $(LS_2)$ inequality.  Here it is noted that for $q<2$, the $(LS_q)$ inequality serves as a certain sharpening of $(LS_2)$, at least when the underlying space is finite dimensional.
\end{rem}

\begin{rem}
\label{tensorisation}
We recall four important standard properties of $(LS_q)$ inequalities that will be used below (see \cite{B-Z} and \cite{G-Z}):
\begin{itemize}
\item[\rm{(i)}] {\rm$(LS_q)$ is stable under tensorisation:} Suppose $\mu_1$ and $\mu_2$ satisfy $(LS_q)$ inequalities with constants $c_1$ and $c_2$ respectively.  Then $\mu_1\otimes\mu_2$ satisfies an $(LS_q)$ inequality with constant $\max\{c_1, c_2\}$.
\item[\rm{(ii)}] {\rm$(LS_q)$ is stable under bounded perturbations:} Suppose $d\mu = \frac{e^{-U}}{Z}dx$ satisfies an $(LS_q)$, and that $W$ is bounded.  Then $\tilde\mu(dx) = \frac{e^{-U - W}}{\tilde Z}dx$ satisfies an $(LS_q)$ inequality.
\item[\rm{(iii)}] {\rm$(LS_q) \Rightarrow (SG_q)$:} Suppose $\mu$ satisfies an $(LS_q)$ inequality with constant $c$.  Then $\mu$ satisfies a $q$-spectral gap inequality (we say $\mu$ satisfies an $(SG_q)$ inequality) with constant $\frac{4c}{\log2}$ i.e. 
\[
\mu\left|f - \mu f\right|^q \leq \frac{4c}{\log2} \mu\left(|\nabla f|^q\right)
\]
for all smooth $f$.
\item[\rm{(iv)}] When the underlying space is finite dimensional, $(LS_q) \Rightarrow (LS_{q'})$ and $(SG_q) \Rightarrow (SG_{q'})$ for $q<q'$.
\end{itemize}
\end{rem} 

The operator $\Delta$ is non-elliptic, but by H\"ormander's theorem it is hypoelliptic, so that the associated heat semigroup $P_t$ has a smooth convolution kernel with respect to the Haar measure.  Note that we can calculate the $\Gamma$ and $\Gamma_2$ functions for this generator explicitly, and we can easily see that there does not exist a constant $\rho\in\R$ such that $\Gamma_2\geq\rho\Gamma$.  However, despite this, in \cite{B-B-B-C} and \cite{Li} it was recently shown that there exists a constant $C$ such that
\[
|\nabla P_t f|(x) \leq CP_t(|\nabla f|)(x), \qquad \forall x\in \He,
\]
which is a surprising result.  It follows directly from this gradient bound that the heat kernel measure on $\He$ satisfies an $(LS_2)$ inequality on $\He$.

\begin{rem}
Since we have the sub-gradient on the right-hand side, \eqref{LSq} is a logarithmic Sobolev inequality corresponding to a H\"ormander type generator.  Indeed,  if $\mu(dx) = \frac{e^{-U}}{Z}dx$ then it is clear that $\mathcal{L} = \Delta - \nabla U.\nabla$ is a Dirichlet operator satisfying
\[
\mu\left(f\mathcal{L}f\right) = - \mu\left(|\nabla f|^2\right),
\]
where $\Delta$ is the sub-Laplacian, and $\nabla$ the sub-gradient.
\end{rem}

Independently, and by very different methods, in \cite{H-Z} the authors were able to show that a related class of measures on $\He$ satisfy $(LS_q)$ inequalities (see Theorem \ref{hz} below).  To describe these we first need to introduce the natural distance function on $\He$, which is the so-called \textit{Carnot-Carath\'eodory distance}.  This distance is more natural than the usual Euclidean one, since it takes into account the extra structure that the Heisenberg group posseses.  

We define the Carnot-Carath\'eodory distance between two points in $\He$ by considering only \textit{admissible} curves between them.  A Lipschitz curve $\gamma:[0,1]\to\He$ is said to be \textit{admissible} if $\gamma'(s) = a_1(s)X_1(\gamma(s)) + a_2(s)X_2(\gamma(s))$ almost everywhere with measurable coefficients $a_1, a_2$ i.e. if $\gamma'(s) \in sp\{X_1(\gamma(s)),X_2(\gamma(s))\}$ a.e.  Then the length of $\gamma$ is given by
$$l(\gamma) = \int_0^1\left(a_1^2(s) + a_2^2(s)\right)^{1/2}ds$$
and we define the Carnot-Carath\'eodory distance between two points $x,y\in\He$ to be 
$$d(x,y) := \inf\{l(\gamma):\gamma\ \textrm{is an admissible path joining}\ x\ \textrm{to}\ y\}.$$
Write $d(x)=d(x,e)$, where $e$ is the identity.

\begin{rem}
This distance function is well defined as a result of Chow's theorem, which states that every two points in $\He$ can be joined by an admissible curve (see for example \cite{B-L-U},\cite{Grom}).
\end{rem}
 
It is well know that $d$ is a homogeneous norm on $\He$ i.e. $d(\delta_\lambda(x)) = \lambda d(x)$ for all $\lambda>0$, where $\delta_\lambda$ is the dilation as defined above (see for example \cite{B-L-U}).
 
Geodesics are smooth, and are helices in $\R^3$.  They have an explicit parameterisation.  For details see \cite{B-B-B-C}, \cite{Beals}, \cite{Bel}, \cite{Monti}.
We also have that $x=(x_1, x_2, x_3)\mapsto d(x)$ is smooth for $(x_1, x_2) \neq 0$, but not at points $(0,0, x_3)$, so that the unit ball has singularities on the $x_3$-axis (one can think of it as being 'apple' shaped).

In our analysis, we will frequently use the following two results.  The first is the well-known fact that the Carnot-Carath\'eodory distance satisfies the eikonal equation (see for example \cite{Monti}):

\begin{proposition}
\label{eik}
Let $\nabla$ be the sub-gradient on $\He$.  Then $|\nabla d(x)|=1$ for all $x=(x_1, x_2, x_3)\in\He$ such that $(x_1, x_2)\neq0$.
\end{proposition}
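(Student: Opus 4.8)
The plan is to establish the two inequalities $|\nabla d(x)| \le 1$ and $|\nabla d(x)| \ge 1$ separately at a fixed point $x$ with $(x_1,x_2)\neq 0$, where by assumption $d$ is smooth so that every derivative below is classical. The one tool I would use repeatedly is the chain rule for a smooth $f$ along an admissible curve $\gamma$ with $\gamma'(s) = a_1(s)X_1(\gamma(s)) + a_2(s)X_2(\gamma(s))$, namely
\[
\frac{d}{ds}f(\gamma(s)) = a_1(s)(X_1 f)(\gamma(s)) + a_2(s)(X_2 f)(\gamma(s)) = (a_1(s),a_2(s))\cdot(\nabla f)(\gamma(s)),
\]
together with the fact that $d$ is $1$-Lipschitz with respect to the Carnot-Carath\'eodory distance itself, i.e. $|d(y)-d(z)|\le d(y,z)$, which is immediate from the triangle inequality.

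For the upper bound I would fix any unit vector $(a_1,a_2)\in\R^2$ and run the admissible curve $\gamma$ starting at $\gamma(0)=x$ with horizontal velocity $a_1 X_1 + a_2 X_2$, parametrised by arc length so that the length of $\gamma|_{[0,s]}$ equals $s$. Since $\gamma|_{[0,s]}$ is an admissible curve joining $x$ to $\gamma(s)$ we have $d(\gamma(s),x)\le s$, so the $1$-Lipschitz property gives $|d(\gamma(s))-d(x)|\le s$. Dividing by $s$ and letting $s\to 0$ yields $|(a_1,a_2)\cdot\nabla d(x)|\le 1$, and taking the supremum over all unit $(a_1,a_2)$ gives $|\nabla d(x)|\le 1$.

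For the lower bound I would invoke the existence of a length-minimising geodesic from the identity $e$ to $x$, guaranteed here since such geodesics exist and are the explicit helices mentioned above. Parametrising it by arc length as $\gamma:[0,T]\to\He$ with $\gamma(0)=e$, $\gamma(T)=x$ and $T=d(x)$, every subarc is again minimising, so $d(\gamma(s))=s$ for all $s\in[0,T]$. Differentiating this identity at $s=T$ and applying the chain rule gives $(a_1(T),a_2(T))\cdot\nabla d(x)=1$, where $(a_1(T),a_2(T))$ is the unit horizontal velocity of $\gamma$ at $x$. The Cauchy-Schwarz inequality then forces $|\nabla d(x)|\ge 1$, and combining the two bounds completes the argument.

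The step I expect to be the main obstacle is the lower bound, which genuinely uses sub-Riemannian geometry rather than soft metric estimates: one must know that $x$ is reached by an arc-length parametrised minimising geodesic and that $d$ is differentiable at the endpoint $x$, so that the derivative of $s\mapsto d(\gamma(s))$ at $s=T$ may legitimately be identified with $(a_1(T),a_2(T))\cdot\nabla d(x)$. The restriction $(x_1,x_2)\neq 0$ is precisely what excludes the singular $x_3$-axis and secures this differentiability; away from that set the existence and smoothness of geodesics quoted earlier make the identification rigorous.
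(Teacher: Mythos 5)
Your proof is correct, but there is nothing in the paper to compare it against: the paper states Proposition \ref{eik} without proof, quoting the eikonal equation as a known fact with a pointer to \cite{Monti}. What you have written is essentially the standard argument from the literature, and it is sound. The upper bound is clean: the constant-coefficient horizontal curve $\gamma(s)=x\cdot\exp\bigl(s(a_1X_1+a_2X_2)\bigr)$ exists globally, has length $s$ on $[0,s]$, and the chain rule only needs differentiability of $d$ at the single point $x$, which the paper guarantees off the $x_3$-axis. The lower bound is where the content lies, and you correctly identified the two ingredients that must be secured: existence of an arc-length minimizing geodesic from $e$ to $x$ (available here either by Hopf--Rinow for the complete, boundedly compact length space $(\He,d)$ or from the explicit helix parametrisation the paper cites, with the minimizer horizontal and of unit speed, there being no abnormal minimizers on $\He$), and differentiability of $d$ at the endpoint $x$. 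One small point worth making explicit: the identity $d(\gamma(s))=s$ holds only on $[0,T]$, so differentiating it at $s=T$ a priori gives a left derivative; this is harmless because the geodesic extends smoothly past $T$ and $d$ is smooth near $x$, so $s\mapsto d(\gamma(s))$ is differentiable at $T$ and its derivative coincides with the left derivative $1$ --- alternatively one can differentiate at an interior $s<T$ (where $\gamma(s)$ also avoids the axis for $s$ near $T$) and let $s\to T$ using continuity of $\nabla d$. With that caveat noted, your Cauchy--Schwarz step $1=(a_1(T),a_2(T))\cdot\nabla d(x)\leq|\nabla d(x)|$ closes the argument, and your proof supplies, from facts the paper already quotes, exactly what its citation leaves implicit.
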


We must be careful in dealing with the notion of $\Delta d$, since it will have singularities on the $x_3$-axis.  However, the following (proved in \cite{H-Z}) provides some control of these singularities.

\begin{proposition}
\label{sub-laplacian lem}
Let $\Delta$ be the sub-Laplacian on $\He$. There exists a constant $K$ such that $\Delta d \leq \frac{K}{d}$ in the sense of distributions.
\end{proposition}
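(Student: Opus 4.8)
The plan is to work directly from the distributional definition. Since $X_1,X_2$ are divergence free with respect to Lebesgue measure, $\Delta$ is formally self-adjoint, so the assertion $\Delta d\le K/d$ in the sense of distributions means that for every nonnegative $\phi\in C_0^\infty(\He)$
\[
\int_\He d\,\Delta\phi\,dx\;\le\;\int_\He \frac{K}{d}\,\phi\,dx .
\]
The first step is a scaling reduction. Writing $\rho=(x_1^2+x_2^2)^{1/2}$ for the Euclidean distance to the $x_3$-axis, $d$ is smooth on $\{\rho>0\}$. A direct check shows that each $X_i$ is homogeneous of degree $1$ under the dilations, $X_i(f\circ\delta_\lambda)=\lambda\,(X_if)\circ\delta_\lambda$, hence $\Delta$ is homogeneous of degree $2$; combined with $d\circ\delta_\lambda=\lambda d$ this forces $\Delta d$ to be homogeneous of degree $-1$ on $\{\rho>0\}$. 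Consequently $d\cdot\Delta d$ is homogeneous of degree $0$, i.e. constant along dilation orbits, so the pointwise estimate $\Delta d\le K/d$ on $\{\rho>0\}$ is equivalent to the single bound $\sup\{\Delta d(x): d(x)=1,\ \rho(x)>0\}\le K$.

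Second, I would remove the singular axis with a cutoff and integrate by parts. On $\Omega_\epsilon=\{\rho>\epsilon\}$, where $d$ is smooth, Green's identity for $\Delta=X_1^2+X_2^2$ gives
\[
\int_{\Omega_\epsilon} d\,\Delta\phi\,dx=\int_{\Omega_\epsilon}(\Delta d)\,\phi\,dx+\int_{\{\rho=\epsilon\}}\sum_{i=1}^{2}\bigl(d\,X_i\phi-(X_i d)\,\phi\bigr)\,\nu_i\,d\sigma,
\]
where $\nu_i=\langle X_i,n\rangle$ and $n$ is the Euclidean unit normal to the cylinder $\{\rho=\epsilon\}$. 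The eikonal equation (Proposition \ref{eik}) gives $|X_id|\le 1$, while $d$, $\phi$, $X_i\phi$ and $\nu_i$ are bounded on the support of $\phi$; since the surface measure of $\{\rho=\epsilon\}\cap\mathrm{supp}\,\phi$ is $O(\epsilon)$, the boundary integral tends to $0$ as $\epsilon\to0$. Hence $\int_\He d\,\Delta\phi\,dx=\lim_{\epsilon\to0}\int_{\Omega_\epsilon}(\Delta d)\,\phi\,dx$, and once the pointwise bound $\Delta d\le K/d$ holds on each $\Omega_\epsilon$ we may estimate $\int_{\Omega_\epsilon}(\Delta d)\phi\le\int_{\Omega_\epsilon}(K/d)\phi\le\int_\He(K/d)\phi$ and pass to the limit. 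Here $1/d$ is locally integrable because the homogeneous dimension of $\He$ is $Q=4$, so the right-hand side is finite; crucially this argument needs no integrability of $\Delta d$ itself, which is exactly why the distributional formulation is the natural one.

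It remains to prove the pointwise bound $\Delta d\le K/d$ on $\{\rho>0\}$, which by the first step is the statement that $\Delta d$ is bounded above on the unit sphere $\{d=1\}$. Away from the two poles $(0,0,\pm s_0)$ where the $x_3$-axis meets $\{d=1\}$ this is immediate, since $\Delta d$ is continuous on the compact sphere minus any neighbourhood of the poles. The difficulty is therefore concentrated at the poles, equivalently in the behaviour of $\Delta d$ as $\rho\to0$ with $x_3$ bounded away from $0$. Here I would insert the explicit parameterisation of the Heisenberg geodesics, which are helices (see \cite{B-B-B-C},\cite{Beals},\cite{Bel},\cite{Monti}), to obtain the asymptotic expansion of $d$ transverse to the axis, and then compute $X_1^2 d+X_2^2 d$ to leading order. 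The expected and necessary conclusion is that $\Delta d$ stays bounded above (indeed it should tend to $-\infty$, reflecting the inward ``dimple'' of the apple-shaped sphere at the poles), so that no positive singular contribution is produced on the axis.

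The main obstacle is precisely this near-axis analysis. The geodesics degenerate as one approaches the $x_3$-axis, so $d$ loses smoothness there and its second-order sub-Riemannian derivatives are delicate to control; one must extract enough of the transverse expansion of $d$ to see both that the tangential second derivatives do not blow up to $+\infty$ and that their sum has the correct sign. Establishing only the upper bound, while allowing $\Delta d\to-\infty$, is exactly what the distributional inequality encodes, and obtaining a clean handle on the sign and size of the singularity from the implicit geodesic equations is where the real work lies.
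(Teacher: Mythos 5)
Your reductions are sound and essentially parallel the paper's: the dilation homogeneity argument giving $\Delta d(x)=\lambda\,\Delta d(\delta_\lambda(x))$ and hence the reduction to bounding $\Delta d$ on $\{d=1\}$ is exactly \eqref{laplacian estimate}, and your cutoff/Green's identity argument (boundary terms $O(\epsilon)$ via Proposition \ref{eik}, local integrability of $1/d$ since $Q=4$) is a correct and useful fleshing-out of the passage from a pointwise bound off the $x_3$-axis to the distributional inequality, a step the paper leaves implicit. However, there is a genuine gap at the core: the upper bound for $\Delta d$ near the poles $(0,0,\pm z)$ of the unit sphere is never established. You write that you \emph{would} insert the explicit geodesic parameterisation to obtain a transverse asymptotic expansion of $d$, and that the boundedness above of $\Delta d$ is the ``expected and necessary conclusion''; this is a plan, not an argument, and it is precisely the entire content of the proposition --- everything else in your write-up (homogeneity, eikonal bound, integration by parts) is routine. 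You yourself concede that this near-axis analysis is ``where the real work lies,'' so the proposal proves the proposition only conditionally on its own key step.

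Concretely, what is missing is what the paper (following \cite{H-Z}) supplies: the existence of $\eta>0$ and a function $\psi(r,s)$, smooth on $A_\eta=\{(r,s):s>0,\ r>-\eta s\}$ --- in particular smooth \emph{across} $r=0$ --- such that $d(x)=\psi(\|x\|,|x_3|)$, together with the strict sign condition $\partial_r\psi<0$ at $r=0$. Given these, formula \eqref{sub-laplacian} shows the only term that blows up as $\|x\|\to0$ is $\frac{1}{\|x\|}\partial_r\psi$, which the sign condition makes negative (consistent with your correct heuristic that $\Delta d\to-\infty$ at the poles), while the remaining terms are bounded by smoothness of $\psi$ near the compact set in question. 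Neither ingredient follows merely from the helices having an explicit parameterisation: geodesics degenerate as their endpoints approach the axis, the axis lies in the cut locus, and one must solve the implicit geodesic equations with smooth dependence to get (i) the smooth extension of $d$ as a function of $(\|x\|,|x_3|)$ up to and slightly beyond $r=0$, and (ii) the strict negativity $\partial_r\psi<0$ encoding the inward ``apple'' singularity. Until those two facts are derived, the pointwise bound on $\{d=1\}$ --- and with it the proposition --- remains unproved.
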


\begin{proof}
For the sake of completeness, we recall part of the proof given in \cite{H-Z}.  It suffices to show that $\Delta d \leq K$ on $\{d(x) =1\}$.  Indeed, using dilations and homogeneity, we have that 
\[
\Delta d(x)=\lambda\Delta d(\delta_\lambda(x))
\]
for all $x\neq0, \lambda>0$, so that for any $x\in\He\backslash\{0\}$
\begin{equation}
\label{laplacian estimate}
\Delta d(x) \leq \frac{1}{d(x)}\sup_{\{d(y) = 1\}}\Delta d(y).
\end{equation}
Since everything is smooth away from the $x_3$ axis, in order to prove that  $\Delta d \leq K$ on $\{d(x) =1\}$, it suffices to look at what happens in a small neighbourhood of $(0,0, z)$, where $z$ is such that $d\left((0,0,z)\right)=1$.
To do this, let
\[
A_\eta := \left\{(r,s) \in \R^2: s>0, r>-\eta s\right\},
\]
 and for $x = (x_1, x_2, x_3)\in\He$ write $\|x\| := \left(x_1^2 + x_2^2\right)^{1/2}$.  Then it is shown that there exists $\eta>0$ and a smooth function $\psi(r,s)$ defined on $A_\eta$ such that for $x=(x_1, x_2, x_3)\in \He$,
\[
d(x) = \psi(\|x\|, |x_3|),
\]
and moreover that $\partial_r\psi<0$ when $r=0$.  One can then compute that
\begin{equation}
\label{sub-laplacian}
\Delta d(x) = \frac{1}{\|x\|}\partial_r\psi(\|x\|, |x_3|) + \partial_r^2\varphi(\|x\|, |x_3|) + \frac{\|x\|^2}{4}\partial_{s}\varphi(\|x\|, |x_3|).
\end{equation}
From \eqref{sub-laplacian} it follows that $\Delta d$ is bounded from above in a small neighbourhood of $(0,0,z)$, since although the first term is unbounded, it is negative.
\end{proof}

The following result is also found in \cite{H-Z}.

\begin{theorem}
\label{hz}
Let $\mu_p$ be the probability measure on $\He$ given by 
\[
\mu_p (dx) = \frac{e^{-\beta d^p(x)}}{\int_\He e^{-\beta d^p(x)}dx}dx
\]
where $p\geq2$, $\beta >0$, $dx$ is the Lebesgue measure on $\R^3$ and $d(x)$ is the Carnot-Carath\'edory distance.  Then $\mu_p$ satisfies an $(LS_q)$ inequality, where $\frac{1}{p}+\frac{1}{q} = 1$.
\end{theorem}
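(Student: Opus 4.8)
The measure $\mu_p = e^{-V}\,dx/Z$ with $V=\beta d^p$ has a potential whose sub-gradient grows like $d^{p-1}$, and since $\tfrac1p+\tfrac1q=1$ gives $(p-1)q=p$, the natural confinement appears at the scale $|\nabla V|^q \asymp d^{p}$. Because the generator $\mathcal L = \Delta - \nabla V\cdot\nabla$ is non-elliptic, the Bakry--\'Emery $CD(\rho,\infty)$ route is unavailable (as emphasised in the introduction), so the plan is to follow the \emph{U-bound method}: first prove an integrated estimate controlling $\mu_p(|f|^q d^p)$ by the Dirichlet form, and then combine this confinement bound with a local coercive inequality to produce the global $(LS_q)$.

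The first step is to establish the U-bound
\[
\mu_p\!\left(|f|^q d^p\right) \;\le\; C\,\mu_p\!\left(|\nabla f|^q\right) + D\,\mu_p\!\left(|f|^q\right)
\]
for all smooth $f$. I would obtain this by integrating by parts against the vector field $d\,\nabla d$. Since $X_1,X_2$ are divergence-free with respect to Lebesgue measure, we have $\int (X_i\phi)\psi\,dx = -\int\phi\,(X_i\psi)\,dx$; applying this with $\phi=|f|^q$ and expanding $\nabla\!\cdot\!\bigl(d\,\nabla d\,e^{-V}\bigr)$ yields three kinds of contributions: a term $\beta p\,\mu_p(|f|^q d^p)$ produced by $\nabla d\cdot\nabla V = \beta p\,d^{p-1}$, where I use the eikonal equation $|\nabla d|^2=1$ of Proposition~\ref{eik}; a curvature term $\mu_p(|f|^q\,d\,\Delta d)$, which is bounded above by $K\,\mu_p(|f|^q)$ via the distributional estimate $\Delta d\le K/d$ of Proposition~\ref{sub-laplacian lem}; and a cross term $\mu_p(\nabla(|f|^q)\cdot d\,\nabla d)$. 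On the cross term I would use $|\nabla(|f|^q)|\le q|f|^{q-1}|\nabla f|$ together with $|\nabla d|=1$, and then Young's inequality with the conjugate exponents $q$ and $p$; the identity $(q-1)p = q$ makes the resulting powers close up exactly, giving a term $\lambda^{q}\mu_p(|\nabla f|^q)$ and a term $\lambda^{-p}\mu_p(|f|^q d^p)$ that can be reabsorbed into the left-hand side for $\lambda$ large. Bounding the leftover $d$-linear term by $1+d^p$ and reabsorbing once more yields the U-bound.

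A U-bound alone is not enough, so the second step is to pair it with a local $(LS_q)$ inequality valid on bounded regions. On a ball $\{d\le R\}$ the density $e^{-\beta d^p}$ is bounded above and below, so $\mu_p$ restricted there is comparable to Lebesgue measure, and the intrinsic sub-elliptic Sobolev inequality on the homogeneous Carnot group $\He$ (of homogeneous dimension $Q=4$) furnishes a local $q$-log-Sobolev inequality for every $q\in(1,2]$. Feeding the U-bound into this local inequality controls the contribution of the region where $d$ is large by the Dirichlet form plus a multiple of $\mu_p(|f|^q)$, producing a \emph{defective} $(LS_q)$ of the form $\mathrm{Ent}_{\mu_p}(|f|^q)\le A\,\mu_p(|\nabla f|^q) + B\,\mu_p(|f|^q)$. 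Finally I would remove the additive defect by a Rothaus-type tightening argument, using a $q$-spectral gap inequality that the same U-bound scheme also furnishes, to arrive at the clean inequality~\eqref{LSq}.

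The main obstacle I anticipate is twofold. First, the integration by parts in the U-bound step must be justified despite the singularity of $\Delta d$ on the $x_3$-axis: the estimate $\Delta d\le K/d$ holds only in the distributional sense, so one has to approximate $d$ by smooth functions (or work with $\max\{d,\varepsilon\}$) and verify that the singular term keeps its favourable sign, exactly as in the proof of Proposition~\ref{sub-laplacian lem}. Second, and more substantively, the passage from the U-bound together with a defective local inequality to the tight global $(LS_q)$ is the genuinely delicate part, since non-ellipticity removes all the standard curvature tools and forces the confinement to come entirely from the growth of $d^p$.
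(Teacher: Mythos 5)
Your proposal follows essentially the same route as the paper's: Theorem \ref{hz} is quoted from \cite{H-Z}, and the paper reproduces precisely this method in Section \ref{results for single site measure} for the more general single-site measures --- the same U-bound obtained by pairing the Leibniz identity with the vector field $d\nabla d$ and invoking Propositions \ref{eik} and \ref{sub-laplacian lem} together with Young's inequality for the conjugate pair $(p,q)$, then a $q$-spectral gap derived from the U-bound, a defective $(LS_q)$, and finally tightening via the $q$-Rothaus inequality. The only cosmetic difference is in how the defective inequality is extracted: the paper uses the global classical Sobolev inequality \eqref{CS} for Lebesgue measure on $\He$ combined with a Jensen-inequality manipulation of $\int g \log g$ where $g = f e^{-\tilde U_i}/\tilde Z_i$, rather than your ball-splitting with a local inequality, but both are standard implementations of the same U-bound machinery.
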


In the remainder of this paper we use the methods contained in \cite{H-Z}, together with an iterative procedure based on ideas contained in \cite{G-Z}, \cite{Led}, \cite{Ze2} and \cite{Ze} to prove an $(LS_q)$ inequality for a class of infinite dimensional measures on $(\He)^{\Z^N}$.

\section{Infinite dimensional setting and main result}
\label{Infinite dimensional setting}
\paragraph{\textit{The Lattice:}}
Let $\mathbb{Z}^N$ be the $N$-dimensional square lattice, for some fixed $N\in\mathbb{N}$.  We equip $\Z^N$ with the $l^1$ lattice metric $dist(\cdot,\cdot)$, defined by
\[
dist(i,j) := \sum_{l=1}^N|i_l - j_l|
\]
for $i=(i_1,\dots, i_N), j=(j_1, \dots, j_N) \in \Z^N$.  For $i,j\in\Z^N$ we will also write 
\[
i\sim j \qquad \Leftrightarrow \qquad dist(i,j) = 1
\]
i.e. $i\sim j$ when $i$ and $j$ are nearest neighbours in the lattice.

For $\Lambda \subset \mathbb{Z}^N$, we will write $|\Lambda|$ for the cardinality of $\Lambda$, and $\Lambda \subset \subset \mathbb{Z}^N$ when $|\Lambda|<\infty$.
  
\paragraph{\textit{The Configuration Space:}}
Let $\Omega = (\mathbb{H})^{\mathbb{Z}^N}$ be the \textit{configuration space}.  We introduce the following notation.  Given $\Lambda \subset \mathbb{Z}^N$ and $\omega = (\omega_i)_{i\in\mathbb{Z}^N}\in\Omega$, let $\omega_\Lambda := (\omega_i)_{i\in\Lambda} \in\mathbb{H}^\Lambda$ (so that $\omega\mapsto \omega_\Lambda$ is the natural projection of $\Omega$ onto $\mathbb{H}^\Lambda$).  
%

Let $f\colon \Omega \to \mathbb{R}$.  Then for $i\in\mathbb{Z}^N$ and $\omega \in \Omega$ define $f_i(\cdot|\omega)\colon\mathbb{H} \to \mathbb{R}$ by
$$f_i(x|\omega) := f(x\bullet_i\omega)$$
where the configuration $x\bullet_i\omega\in \Omega$ is defined by declaring its $i$th coordinate to be equal to $x\in\mathbb{H}$ and all the other coordinates coinciding with those of $\omega\in\Omega$.  Let $C^{(n)}(\Omega)$, $n\in\mathbb{N}$ denote the set of all functions $f$ for which we have $f_i(\cdot|\omega) \in C^{(n)}(\mathbb{H})$ for all $i\in\mathbb{Z}^d$ .  For $i\in\mathbb{Z}^N, k\in\{1,2\}$ and $f\in C^{(1)}(\Omega)$, define
$$X_{i,k}f(\omega) := X_kf_i(x|\omega)\vert_{x = \omega_i},$$
where $X_1, X_2$ are the left invariant vector fields on $\He$ defined in section \ref{Logarithmic Sobolev inequalities on the Heisenberg group}.

Define similarly $\nabla_if(\omega) := \nabla f_i(x|\omega)\vert_{x = \omega_i}$ and $\Delta_if(\omega) := \Delta f_i(x|\omega)\vert_{x = \omega_i}$ for suitable $f$, where $\nabla$ and $\Delta$ are the sub-gradient and the sub-Laplacian on $\He$ respectively.  For $\Lambda \subset \Z^N$, set $\nabla_\Lambda f = (\nabla_if)_{i\in\Lambda}$ and
$$|\nabla_\Lambda f|^q := \sum_{i\in\Lambda}|\nabla_if|^q.$$
We will write $\nabla_{\mathbb{Z^d}} = \nabla$, since it will not cause any confusion.

Finally, a function $f$ on $\Omega$ is said to be \textit{localised} in a set $\Lambda\subset\Z^N$ if $f$ is only a function of those coordinates in $\Lambda$.

\paragraph{\textit{Local Specification and Gibbs Measure:}}
Let $\Phi = (\phi_{\{i,j\}})_{\{i,j\}\subset\Z^N, i\sim j}$ be a family of $C^2$ functions such that $\phi_{\{i,j\}}$ is localised in $\{i,j\}$.  Assume that there exists an $M\in(0,\infty)$ such that $\|\phi_{\{i,j\}}\|_\infty \leq M$ and $\|\nabla_i\nabla_j\phi_{\{i,j\}}\|_\infty\leq M$ for all $i,j\in\Z^N$ such that $i\sim j$.  We say $\Phi$ is a bounded potential of range 1.  For $\omega\in\Omega$, define
$$H_\Lambda^\omega(x_\Lambda) = \sum_{\substack{\{i,j\}\cap\Lambda\neq\emptyset \\ i\sim j}}\phi_{\{i,j\}}(x_i, x_j),$$
for $x_\Lambda = (x_i)_{i\in\Lambda} \in \He^\Lambda$, where the summation is taken over couples of nearest neighbours $i\sim j$ in the lattice with at least one point in $\Lambda$, and where $x_i = \omega_i$ for $i\not\in \Lambda$.

Now let $(\E^{\omega}_\Lambda)_{\Lambda\subset\subset\Z^N,\omega\in\Omega}$ be the local specification defined by
\begin{equation}
\label{local spec}
\E^{\omega}_\Lambda(dx_\Lambda)=\frac{e^{-U^{\omega}_\Lambda(x_\Lambda)}}{\int e^{-U^{\omega}_\Lambda(x_\Lambda)}dx_\Lambda} dx_{\Lambda}\equiv\frac{e^{-U^{\omega}_\Lambda(x_\Lambda)}}{Z^\omega_\Lambda} dx_{\Lambda}
\end{equation}
where $dx_\Lambda$ is the Lebesgue product measure on $\He^\Lambda$ and 
\begin{equation}
\label{hamiltonian}
U^{\omega}_\Lambda (x_\Lambda) = \alpha\sum_{i\in\Lambda}d^p(x_i) + \varepsilon\sum_{\substack{ \{i,j\}\cap\Lambda \neq \emptyset \\ i\sim j}}(d(x_i) + \rho d(x_j))^{2} + \theta H_\Lambda^\omega(x_\Lambda),
\end{equation}
for $\alpha>0$, $\varepsilon, \rho, \theta \in\R$, and $p\geq2$, where as above  $x_i = \omega_i$ for $i\not\in \Lambda$.

\begin{rem}
\label{indices}
In the case when $p=2$, we must have that $\varepsilon> -\frac{\alpha}{2N}$ to ensure that $\int e^{-U_\Lambda}dx_\Lambda <\infty$.
\end{rem}

We define an infinite volume \textit{Gibbs measure} $\nu$ on $\Omega$ to be a solution of the (DLR) equation:
\[
\nu \E^{\cdot}_\Lambda f = \nu f
\]
for all bounded measurable functions $f$ on $\Omega$.  $\nu$ is a measure on $\Omega$ which has $\E^{\omega}_\Lambda$ as its finite volume conditional measures.

~

The main result of this paper is the following:

\begin{theorem}
\label{main}
Let $\nu$ be a Gibbs measure corresponding to the local specification defined by \eqref{local spec} and \eqref{hamiltonian}.  Let $q$ be dual to $p$ i.e. $\frac{1}{p}+\frac{1}{q} =1$ and suppose $\varepsilon\rho>0$, with $\varepsilon>-\frac{\alpha}{2N}$ if $p=2$. Then there exists $\varepsilon_0, \theta_0>0$ such that for $|\varepsilon|<\varepsilon_0$ and $|\theta|<\theta_0$, $\nu$ is unique and satisfies an $(LS_q)$ inequality i.e. there exists a constant $C$ such that
\[
\nu\left(|f|^q\log\frac{|f|^q}{\nu |f|^q}\right) \leq C\nu\left(\sum_i|\nabla_if|^q\right)
\]
for all $f$ for which the right-hand side is well defined.
\end{theorem}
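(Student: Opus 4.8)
The plan is to reduce the infinite-dimensional inequality to a single-site estimate that holds uniformly over all boundary conditions, and then to build the infinite-volume inequality by iterating an entropy decomposition in which the error terms are controlled by the weak-coupling parameters $\varepsilon$ and $\theta$. Concretely, I would first isolate the two essential ingredients: (a) a uniform single-site $(LS_q)$ inequality for the conditional measures $\E^\omega_{\{i\}}$, and (b) a quantitative estimate showing that the sub-gradient $\nabla_j$ almost commutes with the conditional expectation, the defect being proportional to the interaction strength. Throughout one works with finite volumes $\Lambda$ and passes to the limit $\Lambda\uparrow\Z^N$, which the DLR equation legitimises.

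For step (a) I would start from Theorem \ref{hz}, which already gives $(LS_q)$ for the single-site ``free'' measure $\propto e^{-\alpha d^p}dx$. Writing out $U^\omega_{\{i\}}$, the coordinate $x_i$ sees the potential $\alpha d^p(x_i)$, plus the quadratic interaction $\varepsilon\sum_{j\sim i}(d(x_i)+\rho d(\omega_j))^2$, plus the bounded term $\theta H^\omega_{\{i\}}$. The bounded piece is handled immediately by stability under bounded perturbations (Remark \ref{tensorisation}(ii)). The quadratic interaction is unbounded, so bounded perturbation fails; instead I would re-derive the Hebisch--Zegarlinski $U$-bound directly for the full single-site potential, using the eikonal equation $|\nabla d|=1$ (Proposition \ref{eik}) and the distributional estimate $\Delta d\le K/d$ (Proposition \ref{sub-laplacian lem}) to integrate by parts. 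Here the hypotheses $\varepsilon\rho>0$ and (for $p=2$) $\varepsilon>-\frac{\alpha}{2N}$ are exactly what keep the effective single-site potential confining with a $U$-bound constant uniform in $\omega$: expanding the square, the cross term $2\varepsilon\rho\, d(x_i)\,d(\omega_j)\ge 0$ only strengthens the confinement (which is what secures uniformity as $d(\omega_j)\to\infty$), while the diagonal contribution $\varepsilon\, d(x_i)^2$ is dominated by $\alpha d^p(x_i)$ when $p>2$ and is kept confining by the sign condition when $p=2$. Combining the $U$-bound with the local inequality on Carnot--Carath\'eodory balls yields $(LS_q)$ for $\E^\omega_{\{i\}}$ with a constant independent of $\omega$.

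For the passage to infinite volume I would use the checkerboard (two-colour) decomposition $\Z^N=\Z^N_{\mathrm{even}}\cup\Z^N_{\mathrm{odd}}$, so that within each colour class no two sites interact directly; the conditional expectations $\E_0:=\E_{\Z^N_{\mathrm{even}}}$ and $\E_1:=\E_{\Z^N_{\mathrm{odd}}}$ then factorise into products of single-site measures, and tensorisation (Remark \ref{tensorisation}(i)) together with step (a) gives that both $\E_0$ and $\E_1$ satisfy $(LS_q)$ with a common constant $c$. Writing $\mathrm{Ent}_\mu(g)=\mu\big(g\log(g/\mu g)\big)$ and using $\nu\E_0=\nu$, I would then apply the standard entropy splitting
\[
\mathrm{Ent}_\nu(|f|^q)=\nu\big(\mathrm{Ent}_{\E_0}(|f|^q)\big)+\mathrm{Ent}_\nu\big(\E_0|f|^q\big),
\]
bound the first term by $c\,\nu(|\nabla_{\mathrm{even}}f|^q)$, and iterate the decomposition on the second term using $\E_1$, then $\E_0$, and so on. The sub-gradient terms generated at each stage are estimated via step (b): differentiating $\E_0 g$ in a coordinate $j\notin\Z^N_{\mathrm{even}}$ produces $\E_0(\nabla_j g)$ plus a covariance term carrying a factor $\nabla_j U^{\cdot}_{\Z^N_{\mathrm{even}}}$, and since the only $\omega_j$-dependence of this Hamiltonian comes through the interaction, the defect is $O(|\varepsilon|+|\theta|)$. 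Summing these contributions over the lattice produces a geometric-type series with ratio $O(|\varepsilon|+|\theta|)$, which converges for $|\varepsilon|<\varepsilon_0$, $|\theta|<\theta_0$ small and sums to a finite $C$, giving the claimed $(LS_q)$ for $\nu$; uniqueness in the same regime follows from the induced $(SG_q)$ inequality (Remark \ref{tensorisation}(iii)) and the attendant exponential decay of correlations, which forces the finite-volume measures to converge to the same limit independently of boundary conditions.

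The main obstacle I anticipate is step (b) and its interplay with the iteration: one must show that the propagation defect is genuinely of order $|\varepsilon|+|\theta|$ uniformly in the site, in the boundary data, and at \emph{every} stage of the infinite iteration, and that the combinatorics of how gradients at one colour class spread to neighbouring sites yields a summable rather than an accumulating series. The non-ellipticity enters precisely here, since the commutator of $\nabla_j$ with $\E_0$ must be handled through the sub-Riemannian structure --- via $|\nabla d|=1$ and $\Delta d\le K/d$ --- rather than through any curvature lower bound, the latter being unavailable as $\mathrm{Ric}=-\infty$. Controlling these defects uniformly, and closing the geometric series with a constant independent of the volume, is the technical heart of the argument.
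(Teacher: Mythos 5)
Your proposal follows essentially the same route as the paper: a Hebisch--Zegarlinski $U$-bound (via the eikonal equation and $\Delta d\le K/d$) giving the single-site $(LS_q)$ uniformly in the boundary conditions, with the bounded $\theta$-term absorbed by perturbation, followed by the even/odd checkerboard decomposition and Zegarlinski's iterative entropy splitting, in which the gradient-sweeping defect is a covariance term of size controlled by the coupling, summing to a geometric series and yielding uniqueness via convergence of the alternating conditional expectations. The only compression relative to the paper is that your step (b) must be carried out for the nonlinear quantity $\bigl(\E_{\Gamma_0}f^q\bigr)^{1/q}$ rather than $\E_{\Gamma_0}g$ (handled in the paper by H\"older plus a covariance bound proved through the relative entropy inequality and a Herbst-type exponential moment estimate), but this is exactly the mechanism you identify as the technical heart.
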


We briefly mention some consequences of this result.  The first follows directly from Remark \ref{tensorisation} part (iii).

\begin{corollary}
Let $\nu$ be as in Theorem \ref{main}.  Then $\nu$ satisfies the $q$-spectral gap inequality.  Indeed
\[
\nu\left|f-\nu f\right|^q \leq \frac{4C}{\log2}\nu\left(\sum_i|\nabla_if|^q\right)
\]
where $C$ is as in Theorem \ref{main}.
\end{corollary}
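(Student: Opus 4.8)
The plan is to obtain this corollary as an immediate consequence of Theorem \ref{main} combined with the abstract implication $(LS_q) \Rightarrow (SG_q)$ recorded in Remark \ref{tensorisation}(iii). By Theorem \ref{main}, under the stated hypotheses the Gibbs measure $\nu$ satisfies an $(LS_q)$ inequality with some constant $C$; that is, with the convention $|\nabla f|^q = \sum_i |\nabla_i f|^q$ fixed in Section \ref{Infinite dimensional setting}, one has $\nu(|f|^q \log(|f|^q/\nu|f|^q)) \le C\,\nu(\sum_i |\nabla_i f|^q)$ for all $f$ for which the right-hand side is well defined. It then suffices to feed this single inequality into the general mechanism of Remark \ref{tensorisation}(iii) to conclude, with constant $4C/\log 2$, exactly the displayed $(SG_q)$ estimate, over the same class of admissible test functions inherited from Theorem \ref{main}.

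The one point that genuinely needs checking is that Remark \ref{tensorisation}(iii) applies verbatim to the infinite-dimensional measure $\nu$ on $\Omega = (\He)^{\Z^N}$ and to the energy functional $f \mapsto \sum_i |\nabla_i f|^q$, rather than only to a probability measure on a single copy of $\He$. This is indeed the case: the passage from $(LS_q)$ to $(SG_q)$ is a purely functional-analytic manipulation of the defining inequality \eqref{LSq}, in which the left-hand entropy and the right-hand Dirichlet-type energy enter only through their abstract properties. No use is made of the structure of the underlying space, of its dimension, or of the specific form of the $\nabla_i$; the argument treats $f \mapsto \nu(|\nabla f|^q)$ as a single nonnegative energy. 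Consequently the implication transfers directly from the single-site statement of Remark \ref{tensorisation}(iii) to $\nu$ equipped with the product sub-gradient, the energy on the right simply being read as $\sum_i |\nabla_i f|^q$.

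To indicate how Remark \ref{tensorisation}(iii) is itself established (and hence how the numerical factor $4/\log 2$ arises), I would recall the standard route: one tests \eqref{LSq} against functions close to the constant $\mathbf{1}$, namely $f = \mathbf{1} + t g$ with $g$ bounded and $\nu g = 0$, analyses the resulting inequality as $t\to 0$, and optimises the bound using the normalisation of $g$; the factor $4/\log 2$ emerges from this optimisation and is already built into the statement of Remark \ref{tensorisation}(iii). Since I am permitted to assume that Remark, the corollary follows at once upon substituting $c = C$. The proof therefore presents essentially no obstacle: the only substantive content is the dimension-independence of the $(LS_q)\Rightarrow(SG_q)$ implication noted above, after which the result is a one-line substitution into Remark \ref{tensorisation}(iii) of the constant $C$ furnished by Theorem \ref{main}.
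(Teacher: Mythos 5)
Your proposal is correct and takes essentially the same route as the paper, whose entire proof is the one-line observation that the corollary ``follows directly from Remark \ref{tensorisation} part (iii)'' applied with $c=C$ from Theorem \ref{main}; your point that the implication is dimension-free and applies verbatim to $\nu$ on $\Omega=(\He)^{\Z^N}$ with the energy $\sum_i|\nabla_i f|^q$ is exactly the (implicit) content of that citation. One incidental caveat: your sketch of how Remark \ref{tensorisation}(iii) is proved, via $f=\mathbf{1}+tg$ and $t\to0$, only works for $q=2$ --- for $q<2$ the entropy is $O(t^2)$ while the right-hand side is $O(t^q)$, so the limit is vacuous, and the constant $4c/\log 2$ in \cite{B-Z} comes from a non-infinitesimal argument --- but since you explicitly assume the Remark rather than prove it, this does not affect the validity of your proof of the corollary.
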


The proofs of the next two can be found in \cite{B-Z}.

\begin{corollary}
Let $\nu$ be as in Theorem \ref{main} and suppose $f:\Omega \to \R$ is such that $\||\nabla f|^q\|_\infty  < 1$. Then
\[
\nu\left( e^{\lambda f}\right) \leq \exp\left\{\lambda \nu(f) +   \frac{C}{q^q(q-1)}\lambda^q\right\}
\]
for all $\lambda>0$ where $C$ is as in Theorem \ref{main}.  Moreover, by applying Chebyshev's inequality, and optimising over $\lambda$, we arrive at the following `decay of tails' estimate
\[
\nu\left\{\left|f - \int fd\nu\right| \geq h\right\} \leq 2\exp\left\{-\frac{(q-1)^p}{C^{p-1}}h^p\right\}
\]
for all $h>0$, where $\frac{1}{p} + \frac{1}{q} =1$.
\end{corollary}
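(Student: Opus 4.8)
The plan is to derive both statements from the $(LS_q)$ inequality of Theorem \ref{main} by a Herbst-type argument, converting the functional inequality into a differential inequality for the Laplace transform of $f$, and then reading off the tail bound by Chebyshev's inequality and optimisation.

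First I would fix $\lambda>0$ and apply the $(LS_q)$ inequality of Theorem \ref{main} to the function $g = e^{\lambda f/q}$, so that $|g|^q = e^{\lambda f}$ and, site by site, $\nabla_i g = \frac{\lambda}{q}e^{\lambda f/q}\nabla_i f$. Hence $\sum_i|\nabla_i g|^q = (\lambda/q)^q e^{\lambda f}\sum_i|\nabla_i f|^q$, and the hypothesis $\||\nabla f|^q\|_\infty<1$ bounds this pointwise by $(\lambda/q)^q e^{\lambda f}$. Writing $\Psi(\lambda):=\nu(e^{\lambda f})$, the left-hand side of $(LS_q)$ becomes $\lambda\nu(fe^{\lambda f})-\Psi(\lambda)\log\Psi(\lambda)=\lambda\Psi'(\lambda)-\Psi(\lambda)\log\Psi(\lambda)$, so that the inequality reads
\[
\lambda\Psi'(\lambda)-\Psi(\lambda)\log\Psi(\lambda)\le \frac{C}{q^q}\lambda^q\Psi(\lambda).
\]
Dividing by $\lambda^2\Psi(\lambda)$ and noting that the resulting left-hand side is exactly $\frac{d}{d\lambda}\big(\lambda^{-1}\log\Psi(\lambda)\big)$, I obtain $\frac{d}{d\lambda}\big(\lambda^{-1}\log\Psi(\lambda)\big)\le \frac{C}{q^q}\lambda^{q-2}$. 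Since $q>1$ the right-hand side is integrable near $0$, and since $\lambda^{-1}\log\Psi(\lambda)\to\nu(f)$ as $\lambda\to0^+$ (Taylor expansion of $\Psi$ at $0$), integration over $(0,\lambda)$ gives $\lambda^{-1}\log\Psi(\lambda)\le\nu(f)+\frac{C}{q^q(q-1)}\lambda^{q-1}$. Multiplying by $\lambda$ and exponentiating yields precisely the claimed moment bound $\nu(e^{\lambda f})\le\exp\{\lambda\nu(f)+\frac{C}{q^q(q-1)}\lambda^q\}$.

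For the tail estimate I would apply Chebyshev's inequality to $f-\nu(f)$: for $h,\lambda>0$,
\[
\nu\{f-\nu(f)\ge h\}\le e^{-\lambda h}e^{-\lambda\nu(f)}\nu(e^{\lambda f})\le\exp\Big\{-\lambda h+\frac{C}{q^q(q-1)}\lambda^q\Big\}.
\]
Minimising the exponent over $\lambda>0$ gives the optimiser $\lambda_*=\big(h(q-1)q^{q-1}/C\big)^{1/(q-1)}$; substituting it back, and using the dual relation $p=q/(q-1)$ (so that $1/(q-1)=p-1$ and the power of $h$ is $p$), collapses the constant to $\frac{(q-1)^p}{C^{p-1}}$ and produces the bound $\exp\{-\frac{(q-1)^p}{C^{p-1}}h^p\}$ for the upper tail. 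Applying the identical argument to $-f$, whose sub-gradient has the same norm so that the hypothesis is preserved, controls the lower tail by the same quantity, and a union bound supplies the factor $2$.

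The only genuinely delicate point is the integrability and regularity needed to justify $\Psi'(\lambda)=\nu(fe^{\lambda f})$ and the limit $\lambda^{-1}\log\Psi(\lambda)\to\nu(f)$ as $\lambda\to0^+$. The cleanest route is to establish the moment bound first for bounded $f$, for which $\Psi$ is smooth and differentiation under the integral is immediate, and then to remove boundedness by applying the result to the truncations $f_n=(-n)\vee f\wedge n$ and passing to the limit via monotone and dominated convergence, observing that $\||\nabla f_n|^q\|_\infty\le\||\nabla f|^q\|_\infty<1$ so that the hypothesis persists along the approximation.
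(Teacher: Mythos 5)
Your proof is correct, and it takes essentially the same route as the paper, which does not prove this corollary itself but defers to \cite{B-Z}, where the result is obtained by exactly this Herbst-type argument: apply $(LS_q)$ to $g=e^{\lambda f/q}$, integrate the resulting differential inequality for $\lambda^{-1}\log\nu(e^{\lambda f})$, then use Chebyshev and optimise in $\lambda$. Your computations check out — in particular the optimiser $\lambda_*$ and the collapse of the constant to $(q-1)^p/C^{p-1}$ via $(q-1)(p-1)=1$ and $q/p=q-1$ — and you correctly note both that the hypothesis $\||\nabla f|^q\|_\infty<1$ is preserved under $f\mapsto -f$ (giving the factor $2$) and that a truncation argument is needed to justify the regularity steps for unbounded $f$.
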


\begin{corollary}
Suppose that our configuration space is actually finite dimensional, so that we replace $\Z^N$ by some finite graph $G$, and $\Omega = (\He)^G$.  Then Theorem \ref{main} still holds, and implies that if $\mathcal{L}$ is a Dirichlet operator satisfying
\[
\nu\left(f\mathcal{L} f\right) = -\nu\left(|\nabla f|^2\right),
\]
then the associated semigroup $P_t = e^{t\mathcal {L}}$ is ultracontractive.
\end{corollary}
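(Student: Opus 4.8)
The plan is to handle the two assertions in turn: first that Theorem \ref{main} persists when the lattice $\Z^N$ is replaced by a finite graph $G$, and then that the resulting $(LS_q)$ inequality forces $P_t=e^{t\mathcal{L}}$ to be ultracontractive. For the first assertion I would simply observe that the proof of Theorem \ref{main} is assembled from ingredients that are insensitive to the size of the index set: the single-site inequality of Theorem \ref{hz}, the bounded-perturbation stability of Remark \ref{tensorisation}(ii), the tensorisation property (i), and the lattice iteration of \cite{G-Z}, \cite{Ze}. Replacing $\Z^N$ by a finite graph $G$ only simplifies matters, since there is no longer any need to pass to an infinite-volume limit: the measure $\nu$ is literally the finite-volume Gibbs measure $\E^\omega_G$ on $(\He)^G$, and the same argument produces a constant $C$ (now permitted to depend on $|G|$) with
\[
\nu\left(|f|^q\log\frac{|f|^q}{\nu|f|^q}\right)\le C\,\nu\left(\sum_{i}|\nabla_i f|^q\right).
\]

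The substantive point is the implication to ultracontractivity, which I would carry out following \cite{B-Z}. The obstruction to applying the classical Gross argument directly is a mismatch of exponents: the Dirichlet form of $\mathcal{L}$ is the $L^2$ energy $\mathcal{E}(g,g)=\nu\left(\sum_i|\nabla_i g|^2\right)=-\nu(g\mathcal{L}g)$, whereas $(LS_q)$ controls the $q$-energy $\nu\left(\sum_i|\nabla_i g|^q\right)$. To reconcile them I would take $g\ge0$, apply $(LS_q)$ to $f=g^{2/q}$, and use the chain rule together with H\"older's inequality twice --- once over the $|G|$ sites, which is where finite-dimensionality is essential, and once in $\nu$ with conjugate exponents $\frac{2}{2-q}$ and $\frac{2}{q}$ --- to obtain a super-logarithmic Sobolev inequality of the form
\[
\mathrm{Ent}_\nu(g^2):=\nu\left(g^2\log\frac{g^2}{\nu g^2}\right)\le C'\,\bigl(\nu g^2\bigr)^{\frac{2-q}{2}}\,\mathcal{E}(g,g)^{\frac{q}{2}},
\]
where $C'$ absorbs the dimensional factor $|G|^{(2-q)/2}$. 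With this in hand I would invoke Young's inequality to rewrite it, for every $\epsilon>0$, as a Davies--Simon family
\[
\mathrm{Ent}_\nu(g^2)\le \epsilon\,\mathcal{E}(g,g)+C''\,\epsilon^{-\frac{q}{2-q}}\,\nu g^2.
\]
Because $q<2$ (that is, $p>2$), the coefficient $\beta(\epsilon)=C''\epsilon^{-q/(2-q)}$ genuinely blows up as $\epsilon\to0$ with a strictly negative power, which is exactly the condition under which the Davies--Simon semigroup method --- differentiating $t\mapsto\|P_tf\|_{r(t)}$ along an exponent $r(t)$ that reaches $\infty$ in finite time --- upgrades hypercontractivity to a finite bound on $\|P_t\|_{2\to\infty}$ for every $t>0$, and hence yields ultracontractivity of $P_t$.

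I expect the main obstacle to be twofold. First, the exponent bookkeeping: it is the sub-linear power $q/2$ on the energy that produces the inverse-power $\beta(\epsilon)$ driving finite-time ultracontractivity, so the argument has genuine content only in the regime $q<2$; at $q=2$ it collapses to the tight $(LS_2)$ inequality, which (as the Gaussian example shows) yields only hypercontractivity, so the borderline case must be treated separately as in \cite{B-Z}. Second, one must justify the semigroup differentiation rigorously for the non-elliptic generator $\mathcal{L}$ --- positivity preservation, and smoothness and integrability of $P_tf$ --- which is available here precisely because $\Delta$ is hypoelliptic, so that $P_t$ has a smooth kernel, and because the confining potential $U^\omega_G$ endows $\nu$ with the decay-of-tails bounds needed to control the $L^p$ norms along the flow.
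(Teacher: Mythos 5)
The paper offers no proof of this corollary at all --- it simply defers it (together with the preceding exponential-integrability corollary) to \cite{B-Z} --- and your argument is precisely the standard route of that reference: substitute $f=g^{2/q}$ into $(LS_q)$, apply H\"older in $\nu$ with exponents $\frac{2}{2-q},\frac{2}{q}$ and the $\ell^q$--$\ell^2$ norm comparison over the $|G|$ sites (your dimensional factor $|G|^{(2-q)/2}$ is exactly right, since $(|G|^{2/q-1})^{q/2}=|G|^{(2-q)/2}$) to get the super-logarithmic Sobolev inequality $\mathrm{Ent}_\nu(g^2)\le C'\,(\nu g^2)^{(2-q)/2}\,\mathcal{E}(g,g)^{q/2}$, and then run the Davies--Simon scheme with $\beta(\epsilon)\sim\epsilon^{-q/(2-q)}$, which blows up with a finite negative power precisely because $q<2$. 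Your proposal is correct and matches the intended proof, including the two caveats you rightly flag: the implication has content only for $q<2$ (at $q=2$ one recovers merely hypercontractivity, as the Gaussian shows, so the corollary must be read with $p>2$), and the finite-volume version of Theorem \ref{main} goes through by the same iteration --- with the minor remark, which you gloss over, that for a non-bipartite finite graph $G$ the two-colour decomposition $\Gamma_0,\Gamma_1$ must be replaced by finitely many independent classes, a routine modification of Theorem \ref{thm1}.
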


\begin{rem}
In the above we are only considering interactions of range $1$, but we can easily extend our results to deal with the case where the interaction is of finite range $R$.
\end{rem}

\section{Results for the single site measure}
\label{results for single site measure}
The aim of this section is to show that the single site measures
\[
\E^{\omega}_{\{i\}}(dx_i) =: \E^{\omega}_i(dx_i) = \frac{e^{-U^{\omega}_i(x_i)}}{Z^{\omega}_i}dx_i, \qquad i\in\Z^N,
\]
satisfy an $(LS_q)$ inequality uniformly on the boundary conditions $\omega\in\Omega$.  We will often drop the $\omega$ in the notation for convenience.  The work is strongly motivated by the methods of Hebisch and Zegarlinski described in \cite{H-Z}.
\begin{theorem}
\label{uniform LSq}
Let $\frac{1}{q} +\frac{1}{p}=1$, and $\varepsilon\rho>0$ with $\varepsilon>-\frac{\alpha}{2N}$ if $p=2$.  Then there exists a constant $c$, independent of the boundary conditions $\omega \in \Omega$ such that
\[\E^{\omega}_i\left(|f|^q\log\frac{|f|^q}{\E_i^\omega|f|^q}\right) \leq c \E^{\omega}_i(|\nabla_if|^q)\]
for all smooth $f:\Omega \to \R$.
\end{theorem}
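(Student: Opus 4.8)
The plan is to reduce the single-site measure $\E^\omega_i$ to a bounded perturbation of the model measure $\mu_p(dx)=e^{-\beta d^p(x)}/Z\,dx$ covered by Theorem \ref{hz}, and then to absorb the remaining pieces using the stability properties recorded in Remark \ref{tensorisation}. First I would write out the single-site Hamiltonian explicitly. Fixing $i$ and the boundary condition $\omega$, the only terms in $U^\omega_\Lambda$ involving $x_i$ (with $\Lambda=\{i\}$) are
\[
U^\omega_i(x_i) = \alpha\, d^p(x_i) + \varepsilon\sum_{j\sim i}\bigl(d(x_i)+\rho\, d(\omega_j)\bigr)^2 + \varepsilon\sum_{j\sim i}\bigl(d(\omega_j)+\rho\, d(x_i)\bigr)^2 + \theta\sum_{j\sim i}\phi_{\{i,j\}}(x_i,\omega_j) + (\text{const}),
\]
where the constant does not depend on $x_i$ and so drops out of the normalised measure. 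The interaction potential $\Phi$ is bounded by hypothesis, so the term $\theta\sum_{j\sim i}\phi_{\{i,j\}}(x_i,\omega_j)$ is a bounded function of $x_i$, uniformly in $\omega$ (it is bounded by $2N\theta M$). By Remark \ref{tensorisation}(ii), $(LS_q)$ is stable under such bounded perturbations with a constant that depends only on the sup-norm of the perturbation, hence uniformly in $\omega$. I would therefore set this term aside at the outset and work with the measure whose density is proportional to $e^{-V^\omega_i(x_i)}$, where $V^\omega_i$ collects the $\alpha$-term and the two quadratic $\varepsilon$-terms.

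The heart of the argument is then to show that the measure $\propto e^{-V^\omega_i}$ satisfies $(LS_q)$ uniformly in $\omega$. Expanding the quadratics, $V^\omega_i(x_i) = \alpha\,d^p(x_i) + c_2(\varepsilon,\rho)\,d^2(x_i) + c_1(\varepsilon,\rho,\omega)\,d(x_i) + (\text{const})$, where $c_2=\varepsilon(1+\rho^2)\cdot(\#\{j\sim i\})$ and the coefficient $c_1$ of the linear term depends on the $d(\omega_j)$. Since $p\ge 2$, the leading term is $\alpha\, d^p$, and the plan is to treat the lower-order $d^2$ and $d$ contributions as controlled relative perturbations of $\alpha\,d^p$ rather than as bounded ones. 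The strategy here follows \cite{H-Z}: one proves a Gross–type $(LS_q)$ directly for measures of the form $e^{-W(d)}$ on $\He$ by combining the eikonal equation $|\nabla d|=1$ (Proposition \ref{eik}) with the distributional bound $\Delta d\le K/d$ (Proposition \ref{sub-laplacian lem}), which together let one run the one-dimensional Hardy/Bobkov–Zegarlinski criterion in the radial variable $d$. Concretely I would verify a Hardy-type integrability condition on $W(t)=\alpha t^p + c_2 t^2 + c_1 t$ for $t=d(x)\ge 0$, checking that the $d^p$ term dominates so the inequality holds with a constant controlled uniformly over the admissible ranges of $c_1,c_2$.

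The uniformity in $\omega$ is the crucial and most delicate point, since $c_1$ depends on the boundary data through $d(\omega_j)$, which is unbounded. The hypothesis $\varepsilon\rho>0$ guarantees that the cross terms generated by the quadratic interaction have the correct (confining) sign, so that completing the square cannot produce a linear term that overwhelms the $\alpha\, d^p$ confinement; and the condition $\varepsilon>-\alpha/(2N)$ when $p=2$ (Remark \ref{indices}) is exactly what keeps the effective quadratic coefficient positive and the normalising constant finite in the borderline case. The main obstacle I anticipate is establishing that the Hardy/Bobkov–Zegarlinski constant for $W$ can be bounded independently of the arbitrarily large linear coefficient $c_1$: one must show that shifting the potential by a linear-in-$d$ term of unbounded size does not degrade the $(LS_q)$ constant, which requires exploiting the super-quadratic growth of $\alpha\, d^p$ to absorb the shift, together with the sign condition $\varepsilon\rho>0$ to rule out the wrong sign. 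Once this uniform bound for $e^{-V^\omega_i}$ is in hand, reinstating the bounded $\theta$-interaction via Remark \ref{tensorisation}(ii) yields the claimed uniform $(LS_q)$ for $\E^\omega_i$, completing the proof.
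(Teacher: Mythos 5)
Your outer reduction matches the paper: the bounded $\theta H$ term is split off via Remark \ref{tensorisation}(ii), and the remaining single-site potential is of the form $\alpha d^p + c_2 d^2 + c_1(\omega)\,d$. (Note, though, that the paper counts each unordered pair $\{i,j\}$ once, giving $\tilde U_i = \alpha d^p + 2N\varepsilon d^2 + 2\varepsilon\rho\, d\sum_{j\sim i}d(\omega_j)$; your double-counted version changes the coefficients and would shift the borderline condition at $p=2$.) The heart of your argument, however, has a genuine gap: there is no ``one-dimensional Hardy/Bobkov--Zegarlinski criterion in the radial variable $d$'' available on $\He$. Such criteria characterise $(LS_q)$ for measures on the line and control only functions of $d$, whereas the inequality must hold for all smooth $f$; on the Heisenberg group there is no rearrangement or co-area reduction to radial functions (the unit sphere of $d$ is not even smooth on the $x_3$-axis). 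This obstruction is precisely why Hebisch and Zegarlinski introduced the U-bound method that the paper follows. Propositions \ref{eik} and \ref{sub-laplacian lem} enter not through a radial criterion but through integration by parts against the vector field $d\nabla_i d$, yielding Lemma \ref{lem q U-Bound}: $\tilde\E_i^\omega\bigl(|f|^q(d^p + d\sum_{j\sim i}d(\omega_j))\bigr) \leq A\,\tilde\E_i^\omega|\nabla_i f|^q + B\,\tilde\E_i^\omega|f|^q$ with $A,B$ independent of $\omega$.

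Relatedly, the uniformity in $\omega$ --- which you correctly single out as the crux, since $c_1(\omega)$ is unbounded --- is only ``anticipated'' in your proposal, never established. In the paper it is exactly the retained positive term $d\sum_{j\sim i}d(\omega_j)$ on the left-hand side of the U-bound that absorbs the $\omega$-dependence: both $\tilde U_i$ and $d\,|\nabla_i\tilde U_i|$ are dominated by $a\,\mathcal{W}_i^\omega + b$ with $a,b$ independent of $\omega$, where $\mathcal{W}_i^\omega = d^p + d\sum_{j\sim i}d(\omega_j)$. Moreover, even granting a defective $(LS_q)$, your sketch omits the two further steps the paper needs: a uniform $q$-spectral gap (Lemma \ref{qSG}, proved from the U-bound via a local Poincar\'e argument on the sublevel sets $A^\omega(L)$ of $\mathcal{W}_i^\omega$, where uniformity in $\omega$ requires the volume-ratio bound $|A^\omega(2^pL)|/|A^\omega(L)|\leq C_1$), and the tightening of the defective inequality via the Rothaus inequality. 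The defective inequality itself comes from the classical Sobolev inequality on $\He$ plus Jensen's inequality, not from a Hardy-type radial test. To repair your proposal you would have to either supply a valid substitute for the radial criterion --- none is known in this setting --- or carry out the U-bound/spectral-gap/Rothaus chain, which is the paper's proof.
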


It will be convenient to work with alternative measures to the ones defined above.  Indeed, if we can prove uniform $(LS_q)$ inequalities for the single site measures when $\theta=0$ (so that we no longer have the the bounded interaction term in \eqref{hamiltonian}), then by Remark \ref{tensorisation} (ii), which states that $(LS_q)$ inequalities are stable under bounded perturbations, Theorem \ref{uniform LSq} will hold.  Moreover, it is clear that
\[
\frac{e^{-\alpha d^p(x_i) - \varepsilon\sum_{j:j\sim i}(d(x_i) + \rho d(\omega_j))^{2}}}{\int e^{-\alpha d^p(x_i) - \varepsilon\sum_{j:j\sim i}(d(x_i) + \rho d(\omega_j))^{2}}dx_i} = \frac{e^{-\tilde{U}^\omega_i}}{\int e^{-\tilde{U}^\omega_i}dx_i},
\]
where 
$$
\tilde{U}_i(x_i) = \alpha d^p(x_i) + 2N\varepsilon d^{2}(x_i) + 2\varepsilon\rho d(x_i)\sum_{j:j\sim i}d(\omega_j).
$$
It is therefore sufficient to work with the measures defined by $\tilde{\E}_i^\omega(dx_i) = (\tilde{Z}_i^\omega)^{-1}e^{-\tilde{U}_i^\omega}$, instead of $\E_i^\omega$.

The proof of the theorem will be in three steps.  We first prove the following inequality, designated a `$U$-bound' in \cite{H-Z}.

\begin{lemma}
\label{lem q U-Bound}
Let $\frac{1}{p}+\frac{1}{q} =1$ and suppose $\varepsilon\rho>0$, with $\varepsilon>-\frac{\alpha}{2N}$ if $p=2$.  Then the following inequality holds:
\[
\tilde{\E}_i^\omega\left(|f|^q\left(d^{p}  + d \sum_{j:j\sim i}d(\omega_j)\right)\right) \leq A\tilde{\E}_i^\omega|\nabla_if|^q + B\tilde\E_i^\omega |f|^q,
\]
for all smooth $f:\Omega\to\R$, and some constants $A, B\in(0,\infty)$ independent of $\omega$. 
\end{lemma}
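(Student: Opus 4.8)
The plan is to follow the Hebisch--Zegarlinski integration-by-parts strategy used to establish the $U$-bounds in \cite{H-Z}, exploiting the fact that the left invariant fields $X_1, X_2$ are divergence free with respect to the Haar (Lebesgue) measure. Writing $\mu = \tilde{\E}_i^\omega$ and $U = \tilde{U}_i$, a direct computation using the eikonal equation (Proposition \ref{eik}) gives $\nabla U = g\,\nabla d$ with $g = \alpha p\, d^{p-1} + 4N\varepsilon\, d + 2\varepsilon\rho\sum_{j:j\sim i}d(\omega_j)$, so that $\nabla d\cdot\nabla U = g$ and hence $d\,(\nabla d\cdot\nabla U) = \alpha p\, d^p + 4N\varepsilon\, d^2 + 2\varepsilon\rho\, d\sum_{j:j\sim i}d(\omega_j)$. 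This already contains, with positive coefficients (recall $\alpha>0$ and $\varepsilon\rho>0$), precisely the two weights $d^p$ and $d\sum_{j}d(\omega_j)$ appearing in the statement, together with a spurious $d^2$ term.

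First I would establish the integration-by-parts identity $\int (X_k\phi)\,d\mu = \int \phi\,(X_kU)\,d\mu$, valid because $X_1, X_2$ annihilate the Lebesgue divergence. Applying this coordinatewise to the vector field $\Phi := |f|^q\, d\,\nabla d$ and summing over $k\in\{1,2\}$ yields the master identity
\begin{equation*}
\int |f|^q\, d\,(\nabla d\cdot\nabla U)\,d\mu = \int\Big[\, d\,\nabla|f|^q\cdot\nabla d + |f|^q + |f|^q\, d\,\Delta d\,\Big]\,d\mu,
\end{equation*}
where I have used $|\nabla d|^2 = 1$ and $\Delta = X_1^2 + X_2^2$ to expand $\sum_k X_k(|f|^q d\, X_k d)$. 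The left-hand side is the quantity computed in the previous paragraph, so it only remains to absorb the three terms on the right.

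The two harmless terms are handled as follows. Since $|f|^q d \ge 0$ and $\Delta d \le K/d$ in the distributional sense (Proposition \ref{sub-laplacian lem}), testing the distributional inequality against the nonnegative density $|f|^q d\, e^{-U}/\tilde{Z}_i^\omega$ gives $\int |f|^q d\,\Delta d\, d\mu \le K\int|f|^q\,d\mu$, a term of the desired $B$-type. For the gradient term I would use $|\nabla|f|^q| \le q|f|^{q-1}|\nabla f|$ together with Young's inequality in the form $|\nabla f|\,(d|f|^{q-1}) \le \frac{\tau^q}{q}|\nabla f|^q + \frac{1}{p\tau^p}d^p|f|^q$ (valid since $p(q-1)=q$), producing $\tau^q\int|\nabla f|^q\,d\mu + \frac{q}{p\tau^p}\int d^p|f|^q\,d\mu$ for a free parameter $\tau>0$. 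Finally the spurious $d^2$ term is controlled by $d^2 \le \delta\, d^p + C_\delta$ when $p>2$ (Young again), and directly by $d^2 = d^p$ when $p=2$.

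The argument closes by choosing the free parameters so that the total coefficient of $\int d^p|f|^q\,d\mu$ generated on the right-hand side stays strictly below the coefficient $\alpha p$ with which it appears on the left, allowing it to be absorbed; when $\varepsilon>0$ the $d^2$ term sits on the favourable side and may simply be discarded, while when $\varepsilon<0$ (so $\rho<0$) it must be moved across, and for $p=2$ this is exactly where the hypothesis $\varepsilon>-\frac{\alpha}{2N}$ enters, keeping the net coefficient $\frac{q}{p\tau^p}+4N|\varepsilon|$ below $2\alpha$ as $\tau\to\infty$. The main obstacle is not this bookkeeping but the rigorous justification of the integration by parts and of the master identity in the presence of the singularities of $d$ and $\Delta d$ on the $x_3$-axis; this is precisely what Proposition \ref{sub-laplacian lem} is designed to circumvent, and I would make the identity rigorous either by working on $\He$ minus a shrinking tube around the axis and showing that the boundary contributions vanish, or by interpreting every occurrence of $\Delta d$ distributionally against the nonnegative test density above.
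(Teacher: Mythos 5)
Your proposal is correct and takes essentially the same route as the paper's proof: integration by parts against the vector field $|f|^q\,d\,\nabla_i d$ (the paper phrases this via the Leibniz rule applied to $f e^{-\tilde U_i}$), the eikonal equation, the distributional bound $\Delta d \le K/d$ to control $\nabla_i\cdot(d\,\nabla_i d)\le 1+K$, Young's inequality with a free parameter, and the same case analysis in the sign of $\varepsilon$ (absorbing the $d^2$ term via $d^2\le \delta d^p + C_\delta$ for $p>2$, and invoking $\varepsilon>-\frac{\alpha}{2N}$ for $p=2$). The only cosmetic differences are your reciprocal parametrization in Young's inequality ($\tau\to\infty$ rather than the paper's $\tau\to 0$) and that you record the first step as an exact identity before estimating, where the paper passes to an inequality immediately.
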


\begin{proof}
Without loss of generality assume $f\geq0$.  By the Liebniz rule, we have
\begin{equation}
\label{leibniz}
(\nabla_i f)e^{-\tilde{U}_i} = \nabla_i(fe^{-\tilde{U}_i}) + f\nabla_i \tilde{U}_ie^{-\tilde{U}_i}. 
\end{equation}

Taking the inner product of both sides of this equation with $d(x_i)\nabla_id(x_i)$ and integrating yields
\begin{align*}
\int_{\He}fd\nabla_id.\nabla_i\tilde{U}_ie^{-\tilde{U}_i}dx_i &\leq \int_{\He}d|\nabla_id||\nabla_i f|e^{-\tilde{U}_i}dx_i - \int_{\He}d\nabla_id.\nabla_i\left(fe^{-\tilde{U}_i}\right)dx_i \\
&= \int_{\He}d|\nabla_i f|e^{-\tilde{U}_i}dx_i + \int_{\He}f\nabla_i\cdot(d\nabla_id)e^{-\tilde{U}_i}dx_i,
\end{align*}
where we have used integration by parts and Proposition \ref{eik}.  Now by Proposition \ref{sub-laplacian lem}, we have
\[
\nabla_i\cdot(d\nabla_id) = |\nabla_id|^2 + d\Delta_id \leq 1+K
\]
in terms of distributions.  Therefore we have
\begin{align*}
\int_{\He}fd\nabla_id.\nabla_i\tilde{U}_ie^{-\tilde{U}_i}dx_i &\leq  \int_{\He}d|\nabla_i f|e^{-\tilde{U}_i}dx_i + (1+K)\int_{\He}fe^{-\tilde{U}_i}dx_i.
\end{align*}

Replacing $f$ by $f^q$ in this inequality, and using Young's inequality, we arrive at
\begin{align}
\label{U-bound one}
\int_{\He}f^qd\nabla_id.\nabla_i\tilde{U}_ie^{-\tilde{U}_i}dx_i &\leq \frac{1}{\tau}\int_{\He}|\nabla_i f|^qe^{-\tilde{U}_i}dx_i + \frac{q}{p}\tau^{p-1}\int_{\He}f^qd^pe^{-\tilde{U}_i}dx_i \nonumber \\
& \qquad+ (1+K)\int_{\He}f^qe^{-\tilde{U}_i}dx_i,
\end{align}
for all $\tau>0$.

We now calculate that
\begin{align*}
\nabla_i d(x_i).\nabla_i\tilde{U}_i(x_i) &= p\alpha d^{p-1}(x_i) + 4N\varepsilon d(x_i) + 2\varepsilon\rho \sum_{j:j\sim i}d(\omega_j),
\end{align*}
almost everywhere, again using Proposition \ref{eik}.

For $\varepsilon\rho>0$, we therefore have that there exist constants $a_1, b_1 \in (0, \infty)$ such that
\begin{equation}
\label{lower estimate1}
d(x_i)\nabla_i d(x_i).\nabla_i\tilde{U}_i(x_i) \geq a_1\left(d^{p}(x_i)  + d(x_i) \sum_{j:j\sim i}d(\omega_j)\right) - b_1.
\end{equation}
This is clear if $\varepsilon>0$.  If $\varepsilon<0$ and $p>2$ then we use the fact that for any $\delta\in(0,1)$ there exists a constant $C(\delta)$ such that $d \leq \delta d^{p} + C(\delta)$.  If $p=2$, recall from Remark \ref{indices} that we must assume $\varepsilon>-\frac{\alpha}{2N}$, and the assertion follows.

Using the estimate \eqref{lower estimate1} in \eqref{U-bound one} and taking $\tau$ small enough, we see that there exist constants $A, B\in(0, \infty)$ independent of $\omega$  such that
\begin{align*}
&\int f^q\left(d^{p}(x_i)  +  d(x_i) \sum_{j:j\sim i}d(\omega_j)\right)e^{-\tilde{U}_i}dx_i \\
& \qquad \leq A\int |\nabla_if|^qe^{-\tilde{U}_i}dx_i + B\int f^qe^{-\tilde{U}_i}dx_i,
\end{align*}
which proves the lemma.
\end{proof}

The second step is to use this to prove that $\tilde\E_{i}^{\omega}$ satisfies a $q$-spectral gap inequality uniformly on the boundary conditions $\omega$.

\begin{lemma}
\label{qSG}
Let $\frac{1}{p} +\frac{1}{q}=1$ and suppose $\varepsilon\rho>0$, with $\varepsilon>-\frac{\alpha}{2N}$ if $p=2$.  Then $\tilde{\E}_i^\omega$ satisfies the q-spectral gap inequality uniformly on the boundary conditions i.e. there exists a constant $c_0\in(0,\infty)$ independent of $\omega$ such that
\[\tilde\E^{\omega}_i|f-\tilde\E^{\omega}_if|^q \leq c_0 \tilde\E^{\omega}_i|\nabla_i f|^q\]
for all smooth $f:\Omega\to\R$.
\end{lemma}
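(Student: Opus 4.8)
The plan is to deduce the $q$-spectral gap from the $U$-bound of Lemma \ref{lem q U-Bound} by the standard Hebisch--Zegarlinski dichotomy: control the mass of $f$ far from the origin using the confining weight supplied by the $U$-bound, and control its mass near the origin using a local Poincar\'e inequality. Since for any constant $c$ one has, by the triangle inequality and Jensen's inequality applied to $|\tilde\E_i^\omega(f-c)|^q$,
\[
\tilde\E_i^\omega|f-\tilde\E_i^\omega f|^q \le 2^{q-1}\left(\tilde\E_i^\omega|f-c|^q + |\tilde\E_i^\omega(f-c)|^q\right) \le 2^q\,\tilde\E_i^\omega|f-c|^q,
\]
it suffices to bound $\tilde\E_i^\omega|f-c|^q$ by $c_0\,\tilde\E_i^\omega|\nabla_i f|^q$ for a well-chosen constant $c$.

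First I would fix $R>0$ and split $\tilde\E_i^\omega|f-c|^q$ into the contributions of $\{d(x_i)\le R\}$ and $\{d(x_i)>R\}$. On the far region the weight appearing in Lemma \ref{lem q U-Bound} satisfies $d^p + d\sum_{j:j\sim i}d(\omega_j)\ge R^p$, so applying the $U$-bound to $f-c$ (noting $\nabla_i(f-c)=\nabla_i f$ and that the weight is nonnegative) gives
\[
\tilde\E_i^\omega\big(|f-c|^q\Ind_{\{d>R\}}\big) \le \frac{1}{R^p}\big(A\,\tilde\E_i^\omega|\nabla_i f|^q + B\,\tilde\E_i^\omega|f-c|^q\big).
\]
Choosing $R$ so large that $B/R^p\le \tfrac12$ -- possible uniformly in $\omega$ since $A,B$ are $\omega$-independent -- and absorbing the resulting term yields
\[
\tilde\E_i^\omega|f-c|^q \le 2\,\tilde\E_i^\omega\big(|f-c|^q\Ind_{\{d\le R\}}\big) + \frac{2A}{R^p}\,\tilde\E_i^\omega|\nabla_i f|^q .
\]

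It then remains to bound the local term by the gradient. Here I would invoke a Poincar\'e inequality on the Carnot--Carath\'eodory ball $B_R=\{d\le R\}$ for the sub-gradient $\nabla$ (the Poincar\'e inequality for H\"ormander vector fields, available on $\He$ since it is a doubling space supporting a weak Poincar\'e inequality), with $c$ chosen as the Lebesgue average of $f$ over $B_R$. Comparing $\tilde\E_i^\omega$ with Lebesgue measure on the compact set $\overline{B_R}$, on which the density $e^{-\tilde U_i^\omega}$ is continuous and bounded above and below by positive constants, transfers this inequality to $\tilde\E_i^\omega$ and produces $\tilde\E_i^\omega\big(|f-c|^q\Ind_{\{d\le R\}}\big)\le c_R\,\tilde\E_i^\omega|\nabla_i f|^q$. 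Combining with the previous display gives $(SG_q)$ with $c_0 = 2^q(2c_R + 2A/R^p)$.

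The main obstacle is uniformity in the boundary condition $\omega$. The radius $R$ and the constants $A,B$ are already $\omega$-independent, but the local constant $c_R$ obtained by crude comparison with Lebesgue measure degrades like $e^{\operatorname{osc}_{B_R}\tilde U_i^\omega}$, and the oscillation of $\tilde U_i^\omega$ over $B_R$ contains the unbounded term $2\varepsilon\rho\,d(x_i)\sum_{j:j\sim i}d(\omega_j)$. The observation that rescues uniformity is that this is the \emph{only} place $\omega$ enters the density, and that since $\varepsilon\rho>0$ and $\sum_{j:j\sim i}d(\omega_j)\ge0$, the corresponding factor $e^{-2\varepsilon\rho\,d(x_i)\sum_{j:j\sim i}d(\omega_j)}$ is bounded by $1$ and is monotone decreasing in $d(x_i)$, i.e.\ it is a purely \emph{confining} reweighting. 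Such a radially decreasing reweighting should only improve the local spectral gap, so that $c_R$ may be bounded uniformly in $\omega$ by its value when $\sum_{j:j\sim i}d(\omega_j)=0$. Making this monotone-reweighting comparison rigorous in the sub-Riemannian setting -- for instance through a Hardy/Muckenhoupt-type estimate along geodesics, or through a covering argument that exploits the monotonicity rather than the crude oscillation bound -- is where the genuine difficulty lies; the remaining steps are bookkeeping.
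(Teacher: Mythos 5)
Your skeleton (reduce to $\tilde\E_i^\omega|f-c|^q$ for a well-chosen constant, kill the far region with Chebyshev plus the $U$-bound of Lemma \ref{lem q U-Bound}, kill the near region with a local Poincar\'e inequality) is exactly the strategy of the paper's proof, and your far-region step is essentially identical to the paper's treatment of its term $I_2$. But the proposal has a genuine gap at precisely the point you flag: uniformity in $\omega$ of the local constant. Cutting on the fixed Carnot--Carath\'eodory ball $\{d(x_i)\le R\}$ forces a two-sided comparison of $e^{-\tilde U_i^\omega}$ with Lebesgue measure on a set where the oscillation of $\tilde U_i^\omega$ contains $2\varepsilon\rho\,R\sum_{j:j\sim i}d(\omega_j)$, which is unbounded in $\omega$. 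Your proposed rescue --- that a radially decreasing reweighting can only improve the $q$-Poincar\'e constant of a CC-ball --- is a plausible heuristic (for the exponential tilt $e^{-\lambda d}$ the gap indeed improves as $\lambda$ grows) but it is not a citable theorem: multiplying a density by a decreasing function of the distance does not in general preserve Poincar\'e constants, and the Muckenhoupt-along-geodesics or covering argument you gesture at is exactly the hard analytic content, which you leave unproved. As written, your argument only yields a constant $c_0(\omega)$ that may blow up as $\sum_{j:j\sim i}d(\omega_j)\to\infty$, which is useless for the subsequent infinite-dimensional iteration, where uniformity is the whole point.

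The paper's resolution is a different choice of splitting set, and this is the missing idea rather than bookkeeping: it splits on sublevel sets of the full weight, $A^\omega(L)=\{x_i\in\He:\mathcal{W}_i^\omega(x_i)\le L\}$ with $\mathcal{W}_i^\omega=d^p(x_i)+d(x_i)\sum_{j:j\sim i}d(\omega_j)$, taking $m$ to be the Lebesgue average of $f$ over $A^\omega(L)$. On $A^\omega(L)$ one has $d(x_i)\sum_{j:j\sim i}d(\omega_j)\le L$, so the $\omega$-dependent part of $\tilde U_i^\omega$ is bounded by $2\varepsilon\rho L$ \emph{uniformly in} $\omega$: the set shrinks as the boundary data grow, exactly compensating the growth of the coefficient, and the crude density comparison you wanted becomes uniform. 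The local Poincar\'e inequality on $A^\omega(L)$ is then proved by hand (no citation of Jerison-type inequalities): invariance of Lebesgue measure under group translation, $f(x_i)-f(x_iy_i)=\int_0^t\nabla_if(x_i\gamma(s))\cdot\dot\gamma(s)\,ds$ along a geodesic, and H\"older, with the intermediate points shown to stay in the enlarged set $A^\omega(R)$, $R=2^{p-1}(1+2^p)L$, where $e^{-\tilde U_i^\omega}\ge e^{-R}$. The only surviving $\omega$-dependence is the volume ratio $|A^\omega(2^pL)|/|A^\omega(L)|$, bounded uniformly since it tends to $1$ as $\sum_{j:j\sim i}d(\omega_j)\to\infty$. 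The far region is then handled as in your proposal, via $\Ind_{\{\mathcal{W}_i\ge L\}}\le\mathcal{W}_i/L$ and the $U$-bound applied to $f-m$, absorbing the $B/L$ term for $L>B$. If you replace your ball $\{d\le R\}$ by $A^\omega(L)$ and run your own local argument along geodesics, your proof closes with no appeal to the unproved monotone-reweighting principle.
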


\begin{proof}
First note that
\begin{equation}
\label{note}
\tilde{\E}_i|f-\tilde{\E}_if|^q \leq 2^{q} \tilde{\E}_i|f-m|^q
\end{equation}
for any $m\in\R$.

Let $\mathcal{W}_i^\omega(x_i) := d^{p}(x_i)  +  d(x_i) \sum_{j:j\sim i}d(\omega_j)$.   Then, for any $L\in(0,\infty)$, we have
\begin{align*}
\tilde{\E}_i|f-m|^q & = \tilde{\E}_i|f - m|^q\Ind_{\{\mathcal{W}_i \leq L\}} + \tilde{\E}_i|f - m|^q\Ind_{\{\mathcal{W}_i \geq L\}}
\end{align*}
where $\Ind_{\{\mathcal{W}_i \leq L\}}$ is the indicator function of the set $A^\omega(L) := \{x_i \in \He: \mathcal{W}^\omega_i(x_i) \leq L\}$.  Let
\[
I_1:= \tilde{\E}_i|f - m|^q\Ind_{\{\mathcal{W}_i  \leq L\}}, \qquad I_2 :=\tilde{\E}_i|f - m|^q\Ind_{\{\mathcal{W}_i\geq L\}}.
\]
We estimate each of these terms separately.  We can treat $f$ as a function of $x_i$ only by fixing all the others.  Take 
\[
m = m(f) := \frac{1}{|A^\omega(L)|}\int_{A^\omega(L)} f(x_i)dx_i
\]
where $|A^\omega(L)| = \int_{A^\omega(L)}dx_i$ is the Lebesgue measure of $A^\omega(L)$.  Then we have that
\begin{align}
\label{I_1}
I_1 & = \int_{A^\omega(L)}\left| f(x_i) - \frac{1}{|A^\omega(L)|}\int_{A^\omega(L)}f(y_i)dy_i\right|^q\frac{e^{-\tilde U^\omega_i(x_i)}}{\tilde Z_i}dx_i\nonumber\\
&\leq \frac{e^{2N|\varepsilon| L^\frac{2}{p}}}{\tilde Z_i} \int_{A^\omega(L)}\left| f(x_i) - \frac{1}{|A^\omega(L)|}\int_{A^\omega(L)}f(y_i)dy_i\right|^qdx_i
\end{align}
since on $A^\omega(L)$ we have that $\tilde U^\omega_i \geq - 2N|\varepsilon|L^{\frac{2}{p}}$.  Now, using the invariance of the Lebesgue measure with respect to the group translation
\begin{align}
\label{interp1}
&\int_{A^\omega(L)}\left| f(x_i) - \frac{1}{|A^\omega(L)|}\int_{A^\omega(L)}f(y_i)dy_i\right|^qdx_i\nonumber\\
&\qquad \leq \frac{1}{|A^\omega(L)|^q} \int_{A^\omega(L)}\left(\int_\He|f(x_i) - f(x_iy_i)|\Ind_{A^\omega(L)}(x_iy_i)dy_i\right)^qdx_i\nonumber\\
&\qquad \leq \frac{1}{|A^\omega(L)|}\int_\He\int_\He|f(x_i) - f(x_iy_i)|^q \Ind_{A^\omega(L)}(x_iy_i)\Ind_{A^\omega(L)}(x_i)dy_idx_i
\end{align}
using H\"older's inequality.  Let $\gamma : [0, t] \to \He$ be a geodesic in $\He$ from $0$ to $y_i$ such that $|\dot\gamma(s)|\leq1$.  Then
\begin{align*}
|f(x_i) - f(x_iy_i)|^q &= \left| \int_0^t \frac{d}{ds}f(x_i\gamma(s))ds\right|^q \\
&=  \left| \int_0^t \nabla_i f(x_i\gamma(s)).\dot\gamma(s)ds\right|^q\\
&\leq t^\frac{q}{p}\int_0^t|\nabla_i f(x_i \gamma(s))|^qds
\end{align*}
again by H\"older's inequality, where $\frac{1}{p} +\frac{1}{q}=1$.  Here $t = d(y_i)$.  Using this estimate in \eqref{interp1} we see that
\begin{align}
\label{interp2}
&\int_{A^\omega(L)}\left| f(x_i) - \frac{1}{|A^\omega(L)|}\int_{A^\omega(L)}f(y_i)dy_i\right|^qdx_i\nonumber\\
&\qquad \leq \frac{1}{|A^\omega(L)|}\int_\He\int_\He d^\frac{q}{p}(y_i)\int_0^t|\nabla_i f(x_i \gamma(s))|^qds \Ind_{A^\omega(L)}(x_iy_i)\Ind_{A^\omega(L)}(x_i)dy_idx_i.
\end{align}
Note that when $x_iy_i\in A^\omega(L)$ and $x_i\in A^\omega(L)$ then we have $d(x_iy_i)\leq L^{\frac{1}{p}}$ and $d(x_i)\leq L^\frac{1}{p}$, so that
\[
d(y_i) = d(x_i^{-1}x_iy_i) \leq d(x_i) + d(x_iy_i) \leq 2L^\frac{1}{p}.
\]
Therefore, continuing \eqref{interp2}, 
\begin{align}
\label{interp3}
&\int_{A^\omega(L)}\left| f(x_i) - \frac{1}{|A^\omega(L)|}\int_{A^\omega(L)}f(y_i)dy_i\right|^qdx_i\nonumber\\
&\qquad \leq\frac{(2L^\frac{1}{p})^\frac{q}{p}}{|A^\omega(L)|}\int_\He\int_\He\int_0^t|\nabla_i f(x_i \gamma(s))|^q \Ind_{A^\omega(L)}(x_iy_i)\Ind_{A^\omega(L)}(x_i)dsdy_idx_i.
\end{align}

Next we note that for $x_iy_i\in A^\omega(L)$ and $x_i\in A^\omega(L)$ we have
\[
d^p(y_i) + d(y_i) \sum_{j:j\sim i}d(\omega_j) \leq 2^pL
\]
and 
\begin{align*}
d^p(x_i\gamma(s)) + d(x_i\gamma(s)) \sum_{j:j\sim i}d(\omega_j) &\leq 2^{p-1}\left(d^p(x_i) + d(x_i) \sum_{j:j\sim i}d(\omega_j)\right) \\
&\quad + 2^{p-1}\left(d^p(\gamma(s)) + d(\gamma(s)) \sum_{j:j\sim i}d(\omega_j)\right)\\
&\leq 2^{p-1}L + 2^{p-1}\left(d^p(y_i) + d(y_i) \sum_{j:j\sim i}d(\omega_j)\right)\\
&\leq 2^{p-1}L(1+2^p) =: R.
\end{align*}
Thus we can continue \eqref{interp3} by writing
\begin{align}
\label{interp4}
&\int_{A^\omega(L)}\left| f(x_i) - \frac{1}{|A^\omega(L)|}\int_{A^\omega(L)}f(y_i)dy_i\right|^qdx_i\nonumber\\
&\quad \leq\frac{(2L^\frac{1}{p})^\frac{q}{p}}{|A^\omega(L)|}\int_\He\int_\He\int_0^t|\nabla_i f(x_i \gamma(s))|^qds \Ind_{A^\omega(R)}(x_i\gamma(s))\Ind_{A^\omega(2^pL)}(y_i)dy_idx_i\nonumber\\
& \quad \leq\frac{(2L^\frac{1}{p})^\frac{q}{p}}{|A^\omega(L)|}\int_\He d(y_i) \left(\int_\He|\nabla_i f(x_i)|^q\Ind_{A^\omega(R)}(x_i)dx_i\right)\Ind_{A^\omega(2^pL)}(y_i)dy_i\nonumber\\
& \quad \leq\frac{(2L^\frac{1}{p})^{\frac{q}{p}+1}}{|A^\omega(L)|}\int_\He \left(\int_\He|\nabla_i f(x_i)|^q\Ind_{A^\omega(R)}(x_i)dx_i\right)\Ind_{A^\omega(2^pL)}(y_i)dy_i\nonumber\\
& \quad = 2^qL^\frac{q}{p}\frac{|A^\omega(2^pL)|}{|A^\omega(L)|}\int_{A^\omega(R)}|\nabla_i f(x_i)|^qdx_i\nonumber\\
& \quad \leq  2^qL^\frac{q}{p}e^R\frac{|A^\omega(2^pL)|}{|A^\omega(L)|}\int |\nabla_i f(x_i)|^qe^{-\tilde U_i^\omega}dx_i
\end{align}
where in the last line we have used the fact that on the set $A^\omega(R)$ we have $e^{-\tilde U_i^\omega} \geq e^{-R}$.  We finally note that $|A^\omega(2^pL)|/|A^\omega(L)|$ can be bounded above by a constant $C_1$ independent of $\omega$.  This is because $|A^\omega(2^pL)|/|A^\omega(L)|\geq 1$ and
\[
\frac{|A^\omega(2^pL)|}{|A^\omega(L)|} \to 1 \quad {\rm as} \quad \sum_{j:j\sim i}d(\omega_j) \to \infty.
\]
Then, using \eqref{interp4} in \eqref{I_1} yeilds
\[
I_1 \leq C_2 \tilde\E_i|\nabla_i f|^q
\]
where $C_2 = 2^qL^\frac{q}{p}e^{2N|\varepsilon|L^\frac{2}{p} + R}C_1$ is independent of $\omega$.

For the second term, we have that
\begin{align*}
I_2 &\leq \frac{1}{L}\tilde\E_i\left(|f -m|^q\mathcal{W}_i\right) \\
& \leq \frac{A}{L}\tilde\E_i\left|\nabla f \right|^q + \frac{B}{L}\tilde\E_i\left|f-m\right|^q
\end{align*}
where we have used Lemma \ref{lem q U-Bound}.  Putting the estimates for $I_1$ and $I_2$ together, we see that
\begin{align*}
\tilde\E_i\left|f-m\right|^q & \leq  \left(C_2 + \frac{A}{L}\right) \tilde\E_i|\nabla_i f|^q + \frac{B}{L}\tilde\E_i\left|f-m\right|^q \\
\Rightarrow \tilde\E_i\left|f-m\right|^q & \leq \frac{C_2 + \frac{A}{L}}{1-  \frac{B}{L}}\tilde\E_i|\nabla_i f|^q
\end{align*}
for $L>B$, where all constants are independent of $\omega$.  We can finally use this in \eqref{note} to get the result. 

\end{proof}

We can now prove Theorem \ref{uniform LSq}:
\begin{proof}[of Theorem  \ref{uniform LSq}]

Our starting point is the classical Sobolev inequality on the Heisenberg group for the Lebesgue measure (\cite{Var}):  there exists a $t>0$ such that
\begin{equation}
\label{CS}
\left(\int|f|^{1+t}dx_i\right)^{\frac{1}{1+t}} \leq a \int|\nabla_if|dx_i + b\int|f|dx_i,
\end{equation}
for some constants $a,b\in(0,\infty)$. Without loss of generality, we may assume that $f\geq0$.  Suppose also, to begin with, that $\tilde\E_i(f)=1$.  Now, if we set
\[
g\equiv \frac{fe^{-\tilde{U}_i}}{\tilde{Z}_i}
\]
then
\begin{equation}
\label{LS1}
\tilde\E_i(f\log f) = \int_\He g\log g dx_i + \tilde\E_i(f\tilde{U}_i) + \log \tilde{Z}_i.
\end{equation}
Now by Jensen's inequality
\begin{align*}
\int g\log gdx_i & = \frac{1}{t}\int g\log g^tdx_i  \\
& = \frac{1}{t}\int g\log \left(d^{1+t}g^t\right)dx_i - \frac{1+t}{t}\int g\log ddx_i \\
&\leq \frac{1+t}{t}\log\left(\int (dg)^{1+t}dx_i\right)^{\frac{1}{1+t}} +\frac{1+t}{t}\int g ddx_i \\
& \leq\frac{1+t}{t}\left(\int (dg)^{1+t}dx_i\right)^{\frac{1}{1+t}} +\frac{1+t}{t}\tilde\E_i(f d) \\
&\leq \frac{a(1 + t)}{t}\int |\nabla_i (dg)|dx_i + \frac{1 +t}{t}(b+1)\tilde\E_i(f d)\\
&\leq  \frac{a(1 + t)}{t}\int d|\nabla_i g|dx_i + \frac{1 +t}{t}(b+1)\tilde\E_i(f d) + \frac{a(1 + t)}{t},
\end{align*}
where we have used the classical Sobolev inequality \eqref{CS}, the fact that we have assumed $\tilde\E_i(f)=1$, and the elementary inequality $\log x\leq x$.  Hence by \eqref{LS1}
\begin{align}
\label{LS2}
\tilde\E_i(f\log f) &\leq \frac{a(1 + t)}{t}\int d\left|\nabla_i \left(\frac{fe^{-\tilde{U}_i}}{\tilde{Z}_i}\right)\right|dx_i + \tilde\E_i(f\tilde{U}_i) + \frac{1 +t}{t}(b+1)\tilde\E_i(f d)\nonumber\\
&\quad +  \frac{a(1 + t)}{t} + \log \tilde{Z}_i \nonumber\\
& \leq \frac{a(1 + t)}{t}\tilde\E_i(d |\nabla_i f|) + \frac{a(1 + t)}{t}\tilde\E_i(f d|\nabla_ i \tilde U_i|) + \tilde\E_i(f\tilde{U}_i)  \nonumber\\
& \qquad + \frac{1 +t}{t}(b+1)\tilde\E_i(fd^p) + 1 +  \frac{a(1 + t)}{t} + \log \tilde{Z}_i.
\end{align}

Now, since $\varepsilon\rho>0$ we have that $\tilde Z_i^\omega \leq C_3$ for some constant $C_3\in(0, \infty)$ independent of $\omega$.

Moreover, we can directly calculate that
\begin{align}
\label{est1}
d(x_i)|\nabla_i \tilde{U}^\omega_i|(x_i) &\leq \alpha pd^p(x_i) + 4N|\varepsilon|d^2(x_i) + 2\varepsilon\rho d(x_i)\sum_{j:j\sim i}d(\omega_j)\nonumber\\
&\leq  \left(\alpha p + 4N|\varepsilon|\right)d^p(x_i) + 2\varepsilon\rho d(x_i)\sum_{j:j\sim i}d(\omega_j) + 4N|\varepsilon|\nonumber\\
& \leq a_3\mathcal{W}_i^\omega(x_i) + b_3
\end{align}
almost everywhere, where $\mathcal{W}_i^\omega(x_i) = d^{p}(x_i) + d(x_i)\sum_{j:j\sim i}d(\omega_j)$ as in Lemma \ref{qSG}, and $a_3 = \max\{\alpha p + 4N|\varepsilon|, 2\varepsilon\rho\}$ and $b_3=4N|\varepsilon|$ are constants independent of $\omega$.
Similarly there exist constants $a_4, b_4\in[0, \infty)$ independent of $\omega$ such that
\begin{equation}
\label{est2}
\tilde{U}^\omega_i(x_i) \leq a_4\mathcal{W}_i^\omega(x_i) + b_4.
\end{equation}

We can substitute estimates \eqref{est1} and \eqref{est2} into \eqref{LS2}.  This yields
\begin{align}
\label{LS2.5}
\tilde\E_i(f\log f) &\leq c_1\tilde\E_i(d |\nabla_i f|) + c_2\tilde\E_i(f \mathcal{W}_i) + c_3
\end{align}
where 
\begin{align*}
c_1= &\frac{a(1 + t)}{t}, \qquad c_2 =  \frac{aa_3(1 + t)}{t} + a_4 + \frac{1+t}{t}(b+1)\\
&c_3 =  1 +  \frac{a(1 + t)}{t} + \log C_3 + b_3 + b_4
\end{align*}
are all independent of $\omega$.

Now, by fiirst replacing $f$ by $\frac{f}{\tilde\E_i f}$ and then $f$ by $f^q$ in \eqref{LS2.5}, after an application of Young's inequality we see that
\begin{align}
\label{LS3}
\tilde\E_i\left(f^q\log \frac{f^q}{\tilde\E_i f^q}\right) & \leq \frac{c_1}{q}\tilde\E_i(|\nabla_i f|^q) + \left(c_2 + \frac{c_1}{p}\right)\tilde\E_i\left(f^q\mathcal{W}_i\right) + c_3\tilde\E_i(f^q).
\end{align}

We recognise that the second term in \eqref{LS3} can be bounded using Lemma \ref{lem q U-Bound}.  Indeed, using this estimate

\begin{equation}
\label{GLS}
\tilde\E_i\left(f^q\log \frac{f^q}{\tilde\E_i f^q}\right)  \leq \tilde c_1\tilde\E_i(|\nabla_i f|^q) + \tilde c_2\tilde\E_i(f^q),
\end{equation}
where $\tilde c_1 = \frac{c_1}{q} + A\left(c_2 + \frac{c_1}{p}\right)$ and $\tilde c_2 = c_3 + B\left(c_2 + \frac{c_1}{p}\right)$. Thus $\tilde\E^\omega_i$ satisfies the generalised $(LS_q)$ inequality uniformly on the boundary conditions.

Finally, we have the $q$-Rothaus inequality (see \cite{B-Z}, \cite{Ro}), which states that
\[
\tilde\E_i\left(f^q\log\frac{f^q}{\tilde\E_if^q}\right) \leq \tilde\E_i\left(\left|f-\tilde\E_if\right|^q\log\frac{\left|f-\tilde\E_if\right|^q}{\tilde\E_i\left|f-\tilde\E_if\right|^q}\right) + 2^{q+1}\tilde\E_i\left|f-\tilde\E_if\right|^q.
\]
Using this together with \eqref{GLS} and the $q$-spectral gap inequality proved in Lemma \ref{qSG} we thus arrive at a constant $c$ independent of $\omega$ such that
\[
\tilde\E_i\left(f^q\log\frac{f^q}{\tilde\E_if^q}\right) \leq c\tilde\E_i(|\nabla_i f|^q),
\]
which proves Theorem \ref{uniform LSq}.
\end{proof}

\section{The logarithmic Sobolev inequality for Gibbs measures}

In this section we show how to pass from the uniform $(LS_q)$ inequality for the single site measures $\E^{\omega}_i$, to the $(LS_q)$ inequality for the corresponding Gibbs measure $\nu$ on the entire configuration space $\Omega = (\He)^{\Z^N}$.  In the more standard Euclidean model, this problem has been extensively studied in the case $q=2$ , for example in \cite{B-H1}, \cite{G-Z}, \cite{Led}, \cite{Marton} and more recently in \cite{O-R}, as well as in many of the afore mentioned papers.  The case $q<2$ was looked at in \cite{B-Z}.  The following argument is strongly related to these methods, though it is based on the work contained in \cite{Ze2} and \cite{Ze}.

We work in greater generality than is required for Theorem \ref{main}, though the results of section \ref{results for single site measure} show that in the specific case where the local specification is defined by \eqref{local spec} and \eqref{hamiltonian}, the hypotheses \textbf{(H0)} and \textbf{(H1)} below are satisfied.  Then Theorem \ref{main} follows as an immediate corollary of Theorem \ref{thm1}.

Consider a local specification $\{\E^{\omega}_\Lambda\}_{\Lambda\subset\subset\Z^N, \omega\in\Omega}$ defined by 
\begin{equation}
\label{gen local spec}
\E^{\omega}_\Lambda(dx_{\Lambda})=\frac{e^{-\sum_{i\in\Lambda}\phi(x_i)-\sum_{\{i,j\}\cap\Lambda\neq\emptyset, i\sim j}J_{ij}V(x_i,x_j)}dx_\Lambda} {Z^{\omega}_\Lambda}
\end{equation}
where $Z^{\omega}_\Lambda$ is the normalisation factor and the summation is taken over couples of nearest neighbours $i\sim j$ in the lattice with at least one point in $\Lambda$ and where $x_i = \omega_i$ for $i\not\in \Lambda$, as before.  We suppose that $|J_{ij}| \in [0, J_0]$ for some $J_0>0$.

~

We will work with the following hypotheses:
 
 \begin{itemize}
 \item[\textbf{(H0):}] The one dimensional single site measures $\mathbb{E}^{\omega}_i$ satisfy $(LS_q)$ with a constant $c$ which is independent of the boundary conditions $\omega$.  
 
~
 
\item[\textbf{(H1):}] The interaction $V$ is such that
$$
\left\Vert \nabla_i \nabla_j V(x_i,x_j) \right\Vert_{\infty}<\infty.
$$
\end{itemize}
 
\begin{rem}
In the situation where {\rm \textbf{(H1)}} is not satisfied, i.e. when the interaction potential grows faster than quadratically, a number of results have been obtained in \cite{Ioannis1} and \cite{Ioannis2} under some additional assumptions.
\end{rem}

\begin{theorem}
\label{thm1}
Suppose the local specification $\{\mathbb{E}^{\omega}_\Lambda\}_{\Lambda\subset\subset \Z^N,\omega \in
\Omega}$ defined by \eqref{gen local spec} satisifies {\rm\textbf{(H0)}} and {\rm \textbf{(H1)}}.  Then, for sufficiently small $J_0$, the corresponding infinite dimensional Gibbs measure $\nu$ is unique and satisfies the $(LS_q)$ inequality 
$$\nu \left(|f|^q \log\frac{|f|^q}{\nu |f|^q}\right)\leq C \nu (\left| \nabla f\right|^q)$$                               
for some positive constant $C$. 
\end{theorem}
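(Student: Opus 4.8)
The plan is to deduce the infinite-volume inequality from the single-site inequality \textbf{(H0)} by a \emph{sweeping-out} argument in the spirit of \cite{Ze2}, \cite{Ze} and \cite{G-Z}, exploiting the bipartite structure of the nearest-neighbour interaction. Since the interaction has range $1$, I would two-colour the lattice as a checkerboard $\Z^N = B \cup W$, so that no two sites of the same colour are neighbours. Let $\E_B$ (respectively $\E_W$) denote the conditional expectation obtained by integrating out the coordinates in $B$ (respectively $W$) against the local specification, conditioned on the complementary coordinates. Conditionally on the white sites, the measure on the black coordinates factorises into a product of single-site measures $\E_i^\omega$, $i\in B$, each carrying boundary data read off from the neighbouring white sites; hence by \textbf{(H0)} together with stability under tensorisation (Remark \ref{tensorisation}(i)), the conditional measure $\E_B$ satisfies an $(LS_q)$ inequality with the \emph{same} constant $c$, and likewise for $\E_W$.

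First I would invoke the additivity of entropy under conditioning: writing $\mathrm{Ent}_\mu(g):=\mu(g\log g)-\mu(g)\log\mu(g)$, any solution $\nu$ of the (DLR) equations satisfies $\mathrm{Ent}_\nu(f^q)=\nu\big(\mathrm{Ent}_{\E_B}(f^q)\big)+\mathrm{Ent}_\nu\big(\E_B(f^q)\big)$. The first term is immediately controlled by the conditional $(LS_q)$ and tensorisation, giving $\nu(\mathrm{Ent}_{\E_B}(f^q))\leq c\,\nu\big(\sum_{i\in B}|\nabla_i f|^q\big)$. Since $\E_B(f^q)$ depends only on the white coordinates, I would then iterate, applying the same decomposition relative to $\E_W$ to bound $\mathrm{Ent}_\nu(\E_B(f^q))$ by $c\,\nu\big(\sum_{j\in W}|\nabla_j(\E_B(f^q))^{1/q}|^q\big)$ plus a residual entropy $\mathrm{Ent}_\nu(\E_W\E_B(f^q))$.

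The crux is a gradient estimate for the conditional expectation. For $j\in W$, differentiating produces two contributions: the direct term $\E_B(\nabla_j f^q)$, and a term coming from the dependence of the black single-site measures on $\omega_j$ through the interaction, of the form of a covariance $\mathrm{Cov}_{\E_B}(f^q,\nabla_j U)$ with $\nabla_j U=\sum_{i\sim j,\,i\in B}J_{ij}\nabla_j V(x_i,\omega_j)$. Using the $q$-spectral gap for $\E_B$ (which follows from the conditional $(LS_q)$ by Remark \ref{tensorisation}(iii)), together with H\"older's inequality and \textbf{(H1)} — which bounds $\nabla_i\nabla_j V$ and hence the black-site gradient of $\nabla_j U$ — this covariance is bounded by a factor proportional to $J_0$ times black-site gradients of $f$. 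The effect is that one full sweep $P:=\E_W\E_B$ reduces the entropy bound to the single-site Dirichlet form $\nu\big(\sum_k|\nabla_k f|^q\big)$ plus a contribution in which gradients have been \emph{transported} to neighbouring sites with weight $O(J_0)$, plus the smaller residual $\mathrm{Ent}_\nu(P f^q)$.

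Iterating the sweep, the transported-gradient coefficient of each $\nu(|\nabla_k f|^q)$ is bounded by a sum over lattice paths of weights $(C J_0)^m$, which converges when $J_0$ is small enough that $CJ_0$ times the lattice connectivity is $<1$; simultaneously the residual entropy $\mathrm{Ent}_\nu(P^n f^q)\to0$ as $n\to\infty$ by the correlation decay produced by the same contraction. This yields $\mathrm{Ent}_\nu(f^q)\leq C\,\nu(|\nabla f|^q)$ with $C<\infty$, the claimed $(LS_q)$; and uniqueness of $\nu$ follows because the $O(J_0)$ contraction forces the finite-volume conditional expectations of any two (DLR) solutions to the same limit independently of boundary conditions. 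I expect the main obstacle to be precisely the covariance/gradient estimate in the sub-Riemannian setting: one must check that the sub-gradient $\nabla_j$ interacts correctly with the conditional integration and that the non-elliptic structure does not spoil the bound, and — because $q<2$ is permitted — that the genuine $q$-versions of the spectral-gap and H\"older estimates are used throughout rather than their $L^2$ counterparts.
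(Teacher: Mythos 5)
Your proposal follows essentially the same route as the paper's own proof: the checkerboard decomposition $B\cup W$ is exactly the paper's $\Gamma_0,\Gamma_1$, the sweep $P=\E_W\E_B$ is the paper's $\mathcal{P}=\mathbb{E}_{\Gamma_1}\mathbb{E}_{\Gamma_0}$, the entropy decomposition and iteration is the Zegarlinski scheme of \eqref{thm1-1}--\eqref{thm1-7}, your covariance/gradient estimate for $\nabla_j(\E_B f^q)^{1/q}$ is precisely the content of Lemmas \ref{lem1}--\ref{lem3}, and the residual entropy and uniqueness are handled via convergence of $\mathcal{P}^n$ as in Lemma \ref{lem4}. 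The only detail your sketch glosses over is that bounding the covariance requires not just the $q$-spectral gap and H\"older but also the relative entropy inequality plus a Herbst-type exponential moment bound for $|W_i-\hat W_i|^q$ (since \textbf{(H1)} controls only $\nabla_i\nabla_j V$, so $W_i$ itself is unbounded), which is exactly how the paper proves Lemma \ref{lem2}.
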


For notational sake we only prove this result for the case $N=2$, but our methods are easily generalised.  Before proving Theorem \ref{thm1} we will present some useful lemmata.  
 
\subsection{Lemmata:}

Define the following sets
\begin{align*}
&\Gamma_0=(0,0)\cup\{j \in \mathbb{Z}^2 : dist(j,(0,0))=2m \text{\; for some \;}m\in\mathbb{N}\}, \\  
&\Gamma_1=\mathbb{Z}^2\smallsetminus\Gamma_0 .
\end{align*}
where $dist(\cdot, \cdot)$ is as in section \ref{Infinite dimensional setting}.  Note that $dist(i,j)>1$ for all $i,j \in\Gamma_k,k=0,1$ and $\Gamma_0\cap\Gamma_1=\emptyset$.  Moreover $\mathbb{Z}^2=\Gamma_0\cup\Gamma_1$.  As above, for the sake of notation, we will write $\mathbb{E}_{\Gamma_k}=\mathbb{E}_{\Gamma_k}^{\omega}$ for $k=0,1$. We will also define 
$$ \mathcal{P}:=\mathbb{E}_{\Gamma_1}\mathbb{E}_{\Gamma_{0}}.$$
 
\begin{lemma} 
\label{lem1}
If the local specification $\{\mathbb{E}^{\omega}_\Lambda\}_{\Lambda\subset\subset \Z^N,\omega \in
\Omega}$ satisfies {\rm \textbf{(H0)}} and {\rm \textbf{(H1)}}, then, for sufficiently small $J_0$, there exist constants $\tilde D>0$ and $\tilde\eta\in(0,1)$ such that
\begin{equation}
\label{first lem}
\nu\left\vert \nabla_{\Gamma_k}(\mathbb{E}_{\Gamma_{l}}f)
\right\vert^q \leq \tilde D\nu\left\vert \nabla_{\Gamma_k} f
\right\vert^q+\tilde\eta\nu\left\vert \nabla_{\Gamma_l} f
\right\vert^q\end{equation}
for $k,l\in\{0,1\}$ such that $k\neq l$.
\end{lemma}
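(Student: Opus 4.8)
The plan is to exploit the fact that, since $\Gamma_l$ contains no pair of nearest neighbours, the conditional measure $\mathbb{E}_{\Gamma_l}$ factorises as a product $\mathbb{E}_{\Gamma_l}=\prod_{i\in\Gamma_l}\mathbb{E}_i$ of single site measures, each of which depends on its boundary data only through the coordinates $x_m$ with $m\in\Gamma_k$, $m\sim i$. Fixing $m\in\Gamma_k$ and differentiating in the $x_m$-variable, the only $x_m$-dependence of the density of $\mathbb{E}_{\Gamma_l}$ comes from the interaction terms $J_{im}V(x_i,x_m)$ with $i\in\Gamma_l$, $i\sim m$. The Leibniz rule then gives
\[
\nabla_m(\mathbb{E}_{\Gamma_l}f)=\mathbb{E}_{\Gamma_l}(\nabla_m f)-\sum_{\substack{i\in\Gamma_l\\ i\sim m}}J_{im}\,\mathrm{Cov}_{\mathbb{E}_{\Gamma_l}}\!\left(f,\ \nabla_m V(x_i,x_m)\right),
\]
where the first term will feed the $\nabla_{\Gamma_k}$-part of the estimate and the covariance terms, being proportional to $J_{im}$, will feed the small $\nabla_{\Gamma_l}$-part.

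The heart of the argument is to bound each covariance. Since $\nabla_m V(x_i,x_m)$ depends on the $\Gamma_l$-coordinates only through $x_i$, the covariance under $\mathbb{E}_{\Gamma_l}$ collapses to a covariance under the single factor $\mathbb{E}_i$ of $\bar f:=\mathbb{E}_{\Gamma_l\setminus\{i\}}f$ against $g:=\nabla_m V(x_i,x_m)$. Writing it as $\mathbb{E}_i[(\bar f-\mathbb{E}_i\bar f)(g-\mathbb{E}_i g)]$ and applying H\"older's inequality with exponents $q$ and $p$, I would control the first factor by the $q$-spectral gap for $\mathbb{E}_i$ (uniform in $\omega$ by \textbf{(H0)} together with Remark \ref{tensorisation}(iii)) and the second factor by the $p$-spectral gap (available since the single site space is finite dimensional, so $(SG_q)\Rightarrow(SG_p)$ by Remark \ref{tensorisation}(iv), as $q\leq p$). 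This yields
\[
\left|\mathrm{Cov}_{\mathbb{E}_i}(\bar f,g)\right|\leq c_0^{1/q}\left(\mathbb{E}_i|\nabla_i\bar f|^q\right)^{1/q}(c_0')^{1/p}\,\|\nabla_i\nabla_m V\|_\infty,
\]
where $\|\nabla_i\nabla_m V\|_\infty$ is finite by \textbf{(H1)} and $\mathbb{E}_i|\nabla_i\bar f|^q\leq\mathbb{E}_{\Gamma_l}|\nabla_i f|^q$ by Jensen's inequality, using that $\nabla_i$ commutes with $\mathbb{E}_{\Gamma_l\setminus\{i\}}$ since no site of $\Gamma_l\setminus\{i\}$ is a neighbour of $i$. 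All constants are uniform in $\omega$.

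To assemble the estimate I would combine the two displays, use that each $m\in\Gamma_k$ has at most four neighbours in $\Gamma_l$, and raise to the power $q$ via the elementary convexity bound $(a+\sum_{n=1}^4 b_n)^q\leq 5^{q-1}(a^q+\sum_{n=1}^4 b_n^q)$. Applying $\nu$, the DLR equation $\nu\mathbb{E}_{\Gamma_l}^{\cdot}=\nu$, and Jensen's inequality gives, for each $m$,
\[
\nu|\nabla_m(\mathbb{E}_{\Gamma_l}f)|^q\leq 5^{q-1}\nu|\nabla_m f|^q+5^{q-1}(CJ_0)^q\sum_{\substack{i\in\Gamma_l\\ i\sim m}}\nu|\nabla_i f|^q,
\]
with $C$ collecting $c_0^{1/q}(c_0')^{1/p}\sup_{i\sim m}\|\nabla_i\nabla_m V\|_\infty$. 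Summing over $m\in\Gamma_k$, the first term produces $\tilde D\,\nu|\nabla_{\Gamma_k}f|^q$ with $\tilde D=5^{q-1}$, while in the second term each $i\in\Gamma_l$ is counted at most four times, yielding $\tilde\eta\,\nu|\nabla_{\Gamma_l}f|^q$ with $\tilde\eta=4\cdot 5^{q-1}(CJ_0)^q$. Choosing $J_0$ small enough forces $\tilde\eta\in(0,1)$, exactly as required.

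The main obstacle I anticipate is the covariance step for $q<2$: unlike the Hilbertian case $q=2$, one cannot simply use Cauchy--Schwarz with a single spectral gap, and one must pair the $q$-spectral gap for $\bar f$ with the $p$-spectral gap for the test function $\nabla_m V$. Making this rigorous requires justifying the reduction of the product-measure covariance to the single-site covariance, the commutation of $\nabla_i$ past $\mathbb{E}_{\Gamma_l\setminus\{i\}}$, and the passage $(SG_q)\Rightarrow(SG_p)$, all while keeping every constant independent of the boundary condition $\omega$; this uniformity is precisely what permits the subsequent iteration over the two sublattices.
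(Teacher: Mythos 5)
Your proposal is correct and takes essentially the same route as the paper's proof: a Leibniz/covariance expansion of $\nabla_m(\mathbb{E}_{\Gamma_l}f)$, H\"older with the dual exponents $(q,p)$, the uniform $q$-spectral gap from \textbf{(H0)} via Remark \ref{tensorisation} (iii), the finite-dimensional upgrade $(SG_q)\Rightarrow(SG_p)$ of Remark \ref{tensorisation} (iv) combined with \textbf{(H1)} to bound the fluctuation of the interaction gradient, and finally a multiplicity-$4$ count with $J_0$ chosen small. The only differences are organisational: you collapse each covariance onto the single site $i$ and apply the single-site spectral gap plus Jensen, whereas the paper first replaces $\mathbb{E}_{\Gamma_0}$ by $\mathbb{E}_{\{\sim i\}}$ and applies the spectral gaps of the product measure over the whole neighbourhood, yielding trivially different constants ($\tilde D = 5^{q-1}$ versus $2^{q-1}$) but an identical mechanism.
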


 \begin{proof}   
 
For convenience, suppose $k=1$ and $l=0$. The case $k=0,l=1$ follows similarly. We can write 
\begin{align}
\label{lem1-1}
\mathcal{I} &:=\nu\left\vert \nabla_{\Gamma_1 }
(\mathbb{E}_{\Gamma_0}f) \right\vert ^q = \nu\sum_{i\epsilon \Gamma_1}\left\vert \nabla_{i }
(\mathbb{E}_{\Gamma_0}f) \right\vert ^q \nonumber \\
& \leq \nu\sum_{i\epsilon \Gamma_1}
\left\vert \nabla_{i }
(\mathbb{E}_{\{\sim i\}}f) \right\vert ^q \nonumber \\
&\leq2^{q-1}\nu\sum_{i\epsilon \Gamma_1}
\left\vert \mathbb{E}_{\{\sim i\}}\nabla_{i } f
\right\vert
^q+
2^{q-1}J_{0}^{q}\nu\sum_{i\epsilon \Gamma_1}
\left\vert\mathbb{E}_{\{\sim i\}}(f\mathcal{U}_i) \right\vert ^q
\end{align}
where above we have denoted $\{ \sim i\}=\{j:j \sim i\}$, $W_i=\sum_{j\in\{\sim i\}}\nabla_iV(x_{i}, x_{j})$ and 
$$\mathcal{U}_i=W_i-\mathbb{E}_{\{ \sim i\}}W_i.$$
Then
\begin{align}
\label{lem1-2}
\mathcal{I} &\leq2^{q-1}\nu\sum_{i\epsilon \Gamma_1}
\mathbb{E}_{\{\sim i\}}\left\vert \nabla_{i } f \right\vert^q + 2^{q-1}J_{0}^q\nu\sum_{i\epsilon\Gamma_1}\left\vert\mathbb{E}_{\{\sim i\}}(f-\mathbb{E}_{\{\sim i\}}f)\mathcal{U}_i \right\vert ^q \nonumber \\ 
&\leq 2^{q-1}\nu\sum_{i\epsilon \Gamma_1}
\mathbb{E}_{\{\sim i\}}\left\vert \nabla_{i } f
\right\vert^q+2^{q-1}J_{0}^q\nu\left(\sum_{i\epsilon \Gamma_1}\mathbb{E}_{\{\sim i\}}\left|f-\mathbb{E}_{\{\sim i\}}f\right|^q\left(\mathbb{E}_{\{\sim i\}}\left|\mathcal{U}_i\right|^p\right)^{q/p}\right)
\end{align}
using H\"older's inequality and the fact that $\E_{\{\sim i\}}\mathcal{U}_i =0$.  Since interactions occur only between nearest neighbours in the lattice, we have that no interactions occur between points of the set $\{\sim i\}$.  Hence the measure $\mathbb{E}_{\{\sim i\}}^{\omega}$ is the product measure of the single site measures i.e.  $\mathbb{E}_{\{\sim i\}}^{\omega}=\otimes_{j \in\{\sim i\}}\mathbb{E}^{\omega}_j$.
Moreover, by \textbf{(H0)}, all measures $\mathbb{E}^{\omega}_j, j \in\{\sim i\}$ satisfy the $(LS_q)$ inequality with a constant $c$ uniformly on the
boundary conditions.  Therefore, since the $(LS_q)$ inequality is stable under tensorisation (see Remark \ref{tensorisation} (i)), we have that the product measure $\mathbb{E}_{\{\sim i\}}^{\omega}$ also satisfies the $(LS_q)$ inequality with the same constant $c$.  By Remark \ref{tensorisation} (iii), it follows that $\mathbb{E}_{\{\sim i\}}^{\omega}$ also satisfies the $q$-spectral gap inequality with constant $c_0 = \frac{4c}{\log2}$.

Hence we have
\begin{equation}
\label{lem1-3}
\mathbb{E}_{\{\sim i\}}\left|f-\mathbb{E}_{\{\sim i\}}f\right|^q\leq c_0\mathbb{E}_{\{\sim i\}}\left\vert
\nabla_{\{\sim i\}} f\right\vert^q.
\end{equation}
Moreover, by Remark \ref{tensorisation} (iv), since $q<p$ and $\E_{\{\sim i\}}$ is a measure on a finite dimensional space, we have there exists a constant $\tilde c_0$ such that
\begin{align}
\label{lem1-4}
\mathbb{E}_{\{\sim i\}}\left|\mathcal{U}_i\right|^p=& \mathbb{E}_{\{\sim i\}}\left|W_i-\mathbb{E}_{\{\sim
i\}}W_i\right|^p\leq \tilde{c}_0\sum_{_{ j \in\{\sim i\}}}\mathbb{E}_{\{\sim i\}}\left\vert \nabla_{j} W_i\right\vert
^p \nonumber\\
&\leq \tilde{c}_0\sum_{_{ j \in\{\sim i\}}}\mathbb{E}_{\{\sim i\}}\left\vert \nabla_{j} \nabla_iV(x_{i}, x_{j})\right\vert
^p\leq 4\tilde{c}_0M^p\end{align}
where $M=\|\nabla_i\nabla_j V(x_i, \omega_j)\|_\infty < \infty$ by \textbf{(H1)}. 

If we combine \eqref{lem1-2}, \eqref{lem1-3} and \eqref{lem1-4} we obtain
 \begin{align*}
\nu\left\vert \nabla_{\Gamma_1 }
(\mathbb{E}_{\Gamma_0}f) \right\vert ^q &\leq 2^{q-1}\nu\left(\sum_{i\epsilon \Gamma_1}
\mathbb{E}_{\{\sim i\}}\left\vert \nabla_{i } f
\right\vert^q\right)  \\
&\qquad + 2^{q-1}c\left(4\tilde{c}_0\right)^{q/p}M^qJ^q_0\nu\left(\sum_{i\epsilon \Gamma_1}\mathbb{E}_{\{\sim i\}}\left\vert
\nabla_{\{\sim i\}} f\right\vert^q\right)\\
&\leq  2^{q-1}\nu\left(\sum_{i\epsilon \Gamma_1}
\left\vert \nabla_{i } f
\right\vert^q\right)  \\
&\qquad + 2^{q+1}c\left(4\tilde{c}_0\right)^{q/p}M^qJ^q_0\nu\left(\sum_{i\epsilon \Gamma_0}\left\vert
\nabla_{i} f\right\vert^q\right).
\end{align*}
Therefore, choosing $J_0$ sufficiently small so that  $ 2^{q+1}c\left(4\tilde{c}_0\right)^{q/p}M^qJ^q_0<1$, we see that
$$\nu
\left\vert \nabla_{\Gamma_1 }
(\mathbb{E}^{\Gamma_0}f) \right\vert ^q\leq \tilde D\nu\left\vert \nabla_{ \Gamma_1 } f
\right\vert^q+\tilde\eta\nu\left\vert\nabla_{ \Gamma_0} f\right\vert^q$$
 with $\tilde{D} =2^{q-1}$ and $\tilde\eta= 2^{q+1}c\left(4\tilde{c}_0\right)^{q/p}M^qJ^q_0<1$, as required.

 \end{proof}
 
 \begin{lemma}
 \label{lem2}
Suppose the local specification  $\{\mathbb{E}_{\Lambda}^{\omega}\}_{\Lambda\subset\subset \Z^N, \omega \in
\Omega}$ satisfies {\rm \textbf{(H0)}} and {\rm \textbf{(H1)}}, and let $W_i=\sum_{j\in\{\sim i\}}\nabla_iV(x_{i}, x_{j})$ be as in the previous lemma.  Then there exists a constant $\kappa$, independent of the boundary conditions, such that
\begin{align*}
\mathbb{E}_{\{\sim i\}}&\left(|f|^q;  W_{i}\right) \leq \left(\mathbb{E}_{\{\sim i\}}f^q\right)^{\frac{1}{p}}\left( \kappa\E_{\{\sim i \}}\left|\nabla_{\{ \sim i\}} f\right|^q\right)^\frac{1}{q},
\end{align*}
where $\frac{1}{p}+\frac{1}{q}=1$ and $\E_{\{\sim i \}}(g;h):= \E_{\{\sim i \}}(gh) - \E_{\{\sim i \}}(g)\E_{\{\sim i \}}(h)$ for any functions $g,h$.
\end{lemma}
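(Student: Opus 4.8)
The plan is to bound the covariance $\mathbb{E}_{\{\sim i\}}(|f|^q; W_i)$ by first exploiting that $\mathbb{E}_{\{\sim i\}}$ is a product measure (since no interactions occur within the independent set $\{\sim i\}$) and then applying a spectral-gap-type Cauchy--Schwarz argument. Writing out the covariance, I would begin from the elementary identity
\[
\mathbb{E}_{\{\sim i\}}(|f|^q; W_i) = \mathbb{E}_{\{\sim i\}}\left(\left(|f|^q - \mathbb{E}_{\{\sim i\}}|f|^q\right) W_i\right) = \mathbb{E}_{\{\sim i\}}\left(|f|^q\left(W_i - \mathbb{E}_{\{\sim i\}} W_i\right)\right),
\]
which lets me insert the centred quantity $\mathcal{U}_i = W_i - \mathbb{E}_{\{\sim i\}} W_i$ on either side. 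The first step is therefore to apply H\"older's inequality with exponents $p$ and $q$ to split this into a factor involving $|f|^q$ and a factor involving $\mathcal{U}_i$:
\[
\mathbb{E}_{\{\sim i\}}(|f|^q; W_i) \leq \left(\mathbb{E}_{\{\sim i\}}|f|^q\right)^{\frac{1}{p}} \left(\mathbb{E}_{\{\sim i\}}|f|^q\,|\mathcal{U}_i|^q\right)^{\frac{1}{q}},
\]
after redistributing the weight $|f|^q$.

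The key second step is to control $\mathbb{E}_{\{\sim i\}}|f|^q|\mathcal{U}_i|^q$ by the Dirichlet-type energy $\mathbb{E}_{\{\sim i\}}|\nabla_{\{\sim i\}} f|^q$. Here I would reuse exactly the machinery already deployed in the proof of Lemma \ref{lem1}: the product measure $\mathbb{E}_{\{\sim i\}}^\omega = \bigotimes_{j\in\{\sim i\}}\mathbb{E}^\omega_j$ satisfies an $(LS_q)$ inequality with the uniform constant $c$ by \textbf{(H0)} and tensorisation (Remark \ref{tensorisation} (i)), hence a $q$-spectral gap inequality with constant $c_0$ by Remark \ref{tensorisation} (iii). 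Applying the spectral gap to $\mathcal{U}_i$ and using the bound on $\mathbb{E}_{\{\sim i\}}|\mathcal{U}_i|^p \leq 4\tilde{c}_0 M^p$ already obtained in \eqref{lem1-4} (via \textbf{(H1)}), I can absorb the $\mathcal{U}_i$ contribution into a constant depending only on $c$, $\tilde c_0$ and $M$, all uniform in $\omega$. This produces a constant $\kappa$ of the form $\kappa = c\,(4\tilde c_0)^{q/p} M^q$ (up to combinatorial factors from $|\{\sim i\}|$), matching the desired form $\left(\kappa\,\mathbb{E}_{\{\sim i\}}|\nabla_{\{\sim i\}} f|^q\right)^{1/q}$ once the spectral gap replaces $\mathbb{E}_{\{\sim i\}}|f - \mathbb{E}_{\{\sim i\}}f|^q$ by the gradient energy.

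I expect the main obstacle to be the careful bookkeeping in the H\"older splitting: the naive application separates $|f|^q$ from $|\mathcal{U}_i|^q$, but to recover the stated right-hand side one wants the clean product of $(\mathbb{E}_{\{\sim i\}}f^q)^{1/p}$ with a pure gradient term, which requires that the spectral-gap step be applied not to $|f|^q\,\mathcal{U}_i$ directly but after centring, so that $\mathbb{E}_{\{\sim i\}}f^q$ factors out and only the fluctuation of $f$ feeds into $|\nabla_{\{\sim i\}} f|^q$. Getting these two factors to emerge simultaneously — rather than picking up an extra $\mathbb{E}_{\{\sim i\}}|f-\mathbb{E}_{\{\sim i\}}f|^q$ term that one would then have to re-expand — is the delicate point, and I would handle it by replacing $f$ appropriately and invoking the $q$-spectral gap in the form \eqref{lem1-3} to convert the variance directly into the gradient energy.
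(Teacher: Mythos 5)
Your target shape for the bound --- a factor $\left(\E_{\{\sim i\}}f^q\right)^{1/p}$ times a pure gradient factor --- is right, and your opening H\"older step is legitimate, but the core of your plan has a genuine gap: the intermediate inequality you rely on, namely $\E_{\{\sim i\}}\left(f^q|\mathcal{U}_i|^q\right) \leq \kappa\,\E_{\{\sim i\}}\left|\nabla_{\{\sim i\}}f\right|^q$ (or any variant in which the weight sitting next to $|\mathcal{U}_i|^q$ is the \emph{uncentred} $f^q$), is false: take $f\equiv 1$, so the left-hand side equals $\E_{\{\sim i\}}|\mathcal{U}_i|^q>0$ while the right-hand side vanishes. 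You flag this as ``the delicate point'' and propose to fix it by centring after the fact, but the cancellation has to be built in \emph{before} the H\"older splitting, and this requires an ingredient your proposal does not contain: duplication plus interpolation. The paper writes $\E_{\{\sim i\}}(f^q;W_i)=\frac12\,\E_{\{\sim i\}}\otimes\hat\E_{\{\sim i\}}\left(\left(f^q-\hat f^q\right)\left(W_i-\hat W_i\right)\right)$ and uses $f^q-\hat f^q=q\int_0^1F_s^{q-1}(f-\hat f)\,ds$ with $F_s=sf+(1-s)\hat f$; H\"older then sends $F_s^{q-1}$ into the first factor, which Jensen collapses to $\left(\E_{\{\sim i\}}f^q\right)^{1/p}$ (using $(q-1)p=q$ and convexity), while the second factor becomes $\left(\E_{\{\sim i\}}\otimes\hat\E_{\{\sim i\}}\left|f-\hat f\right|^q\left|W_i-\hat W_i\right|^q\right)^{1/q}$, i.e.\ the fluctuation of $f$ itself at power exactly $q$.

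Your second step fails as well. You propose to decouple the remaining product using the moment bound $\E_{\{\sim i\}}|\mathcal{U}_i|^p\leq 4\tilde c_0 M^p$ from \eqref{lem1-4} together with the spectral gap, i.e.\ to ``reuse exactly the machinery of Lemma \ref{lem1}.'' But in Lemma \ref{lem1} the expectation sits \emph{inside} the $q$-th power, so $\left|\E_{\{\sim i\}}\left((f-\E_{\{\sim i\}}f)\mathcal{U}_i\right)\right|^q\leq \E_{\{\sim i\}}|f-\E_{\{\sim i\}}f|^q\left(\E_{\{\sim i\}}|\mathcal{U}_i|^p\right)^{q/p}$ suffices there; here one must bound $\E_{\{\sim i\}}\otimes\hat\E_{\{\sim i\}}\left(|f-\E_{\{\sim i\}}f|^q|W_i-\hat W_i|^q\right)$, and any H\"older splitting of this product raises the moment of $|f-\E_{\{\sim i\}}f|$ strictly above $q$, which the $(SG_q)$ inequality cannot convert into $\E_{\{\sim i\}}|\nabla_{\{\sim i\}}f|^q$; nor is $\mathcal{U}_i$ bounded (only the mixed derivatives $\nabla_j\nabla_iV$ are bounded by \textbf{(H1)}; $\nabla_iV$ itself grows, e.g.\ linearly in $d(x_j)$ for the quadratic interaction), so no sup-norm shortcut exists. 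The paper's decoupling uses two tools absent from your plan: the relative entropy inequality $\mu(uv)\leq\frac1\tau\mu(u)\log\mu\left(e^{\tau v}\right)+\frac1\tau\mu\left(u\log\frac{u}{\mu(u)}\right)$ applied with $u=|f-\E_{\{\sim i\}}f|^q$ and $v=|W_i-\hat W_i|^q$, together with a Herbst-type exponential moment bound $\E_{\{\sim i\}}\otimes\hat\E_{\{\sim i\}}\left(e^{\tau|W_i-\hat W_i|^q}\right)\leq\Theta$, valid for some $\tau>0$ by \textbf{(H0)} and \textbf{(H1)}. The entropy term is then handled by the $(LS_q)$ inequality of \textbf{(H0)} and the remaining variance by $(SG_q)$, keeping the gradient at power exactly $q$ and yielding $\kappa=\frac{q^q}{\tau}\left(c_0\log\Theta+c\right)$.
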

 \begin{proof} 
 
 Without loss of generality, we may suppose that $f\geq0$.  Let $\hat\E_{\{\sim i\}}$ be an isomorphic copy of $\mathbb{E}_{\{\sim i\}}$. Then we have 
\begin{align*}
\mathbb{E}_{\{\sim i\}}(f^q;W_{i}) &=\frac{1}{2}\E_{\{\sim i\}}\otimes\hat\E_{\{\sim i\}}\left(\left(f^q- \hat f^{q}\right)(W_{i}-\hat W_i  )\right) \\
& = \frac{1}{2}\E_{\{\sim i\}}\otimes\hat\E_{\{\sim i\}}\left[\left(\int_0^1\frac{d}{ds}F_s^qds\right)\left(W_i - \hat W_i\right)\right]
\end{align*}
where $F_s = sf + (1-s)\hat f$ for $s\in[0,1]$.  Then
\begin{align}
\label{covlem1}
\mathbb{E}_{\{\sim i\}}(f^q;W_{i}) &= \frac{q}{2} \E_{\{\sim i\}}\otimes\hat\E_{\{\sim i\}}\left[\left(\int_0^1F_s^{q-1}ds\right)\left(f-\hat f\right)\left(W_i - \hat W_i\right)\right] \nonumber\\
&\leq \frac{q}{2}\left\{ \E_{\{\sim i\}}\otimes\hat\E_{\{\sim i\}}\left(\int_0^1F_s^{q-1}ds\right)^p\right\}^\frac{1}{p} \nonumber\\
& \quad \times \left\{ \E_{\{\sim i\}}\otimes\hat\E_{\{\sim i\}}\left|f-\hat f\right|^q\left|W_i - \hat W_i\right|^q\right\}^{\frac{1}{q}}.
\end{align}

Now by Jensen's inequality and convexity of the function $y\mapsto y^q$ we have
\begin{align}
\label{covlem1a}
\left\{ \E_{\{\sim i\}}\otimes\hat\E_{\{\sim i\}}\left(\int_0^1F_s^{q-1}ds\right)^p\right\}^\frac{1}{p} &\leq \left\{ \int_0^1\E_{\{\sim i\}}\otimes\hat\E_{\{\sim i\}}F_s^{q}ds\right\}^\frac{1}{p}\nonumber\\
&\leq  \left\{ \int_0^1\E_{\{\sim i\}}\otimes\hat\E_{\{\sim i\}}\left(sf^{q} + (1-s)\hat f^q\right)ds\right\}^\frac{1}{p}\nonumber\\
&= \left(\E_{\{\sim i\}} f^q\right)^{\frac{1}{p}}.
\end{align}

Moreover,
\begin{align}
\label{covlem2}
&\E_{\{\sim i\}}\otimes\hat\E_{\{\sim i\}}\left|f-\hat f\right|^q\left|W_i - \hat W_i\right|^q \nonumber \\
& \qquad \leq  2^q\E_{\{\sim i\}}\otimes\hat\E_{\{\sim i\}}\left|f-\E_{\{\sim i\}}f\right|^q\left|W_i - \hat W_i\right|^q.
 \end{align}

We have the following relative entropy inequality (see eg \cite{A-B-C-F-G-M-R-S}, \cite{D-S}): if $\mu$ is a probability measure then
\[
\mu(uv) \leq \frac{1}{\tau}\mu(u)\log\mu(e^{\tau v}) + \frac{1}{\tau}\mu\left(u\log\frac{u}{\mu (u)}\right), \qquad \forall \tau>0.
\]
Applying this to the right hand side of \eqref{covlem2} with $\mu = \E_{\{\sim i\}}\otimes\hat\E_{\{\sim i\}}$ we see that $\forall \tau>0$
\begin{align}
\label{covlem3}
&\E_{\{\sim i\}}\otimes\hat\E_{\{\sim i\}}\left|f-\hat f\right|^q\left|W_i - \hat W_i\right|^q \nonumber\\
&\qquad \qquad \leq \frac{2^q}{\tau}\E_{\{\sim i\}}\left|f-\E_{\{\sim i\}}f\right|^q\log \E_{\{\sim i\}}\otimes\hat\E_{\{\sim i\}}\left(e^{\tau|W_i - \hat W_i|^q}\right) \nonumber \\
& \quad\quad\quad + \frac{2^q}{\tau} \E_{\{\sim i\}}\left(\left|f-\E_{\{\sim i\}}f\right|^q\log\frac{\left|f-\E_{\{\sim i\}}f\right|^q}{\E_{\{\sim i\}} \left|f-\E_{\{\sim i\}}f\right|^q}\right).
\end{align}

Now, by the Herbst argument, see for example \cite{Helffer} or \cite{Ledconc} , and using both \textbf{(H0)} and \textbf{(H1)}, we have that for some $\tau>0$ there exists a constant $\Theta>0$ independent of $\omega$ such that
\[
\E_{\{\sim i\}}\otimes\hat\E_{\{\sim i\}}\left(e^{\tau|W_i - \hat W_i|^q}\right) \leq \Theta.
\]
We can also use \textbf{(H0)} to bound the second term of \eqref{covlem3}.  This gives
\begin{align}
\label{covlem4}
\E_{\{\sim i\}}\otimes\hat\E_{\{\sim i\}}\left|f-\hat f\right|^q\left|W_i - \hat W_i\right|^q \nonumber &\leq \frac{2^q\log\Theta}{\tau}\E_{\{\sim i\}}\left|f-\E_{\{\sim i\}}f\right|^q \nonumber \\
& \quad\quad + \frac{2^qc}{\tau}\E_{\{\sim i\}}\left|\nabla_{\{\sim i\}}f\right|^q\nonumber\\
&\leq \frac{2^q}{\tau}\left(c_0\log\Theta + c\right) \E_{\{\sim i\}}\left|\nabla_{\{\sim i\}}f\right|^q
\end{align}
where $c_0=\frac{4c}{\log2}$ as above, by Remark \ref{tensorisation} (iii).

Putting estimates \eqref{covlem1a} and \eqref{covlem4} into \eqref{covlem1} we see that 
\begin{align*}
\mathbb{E}_{\{\sim i\}}(f^q;W_{i}) &\leq \left(\E_{\{\sim i\}} f^q\right)^{\frac{1}{p}}\left(\frac{q^q}{\tau}(c_0\log\Theta +c)\E_{\{\sim i\}}\left|\nabla_{\{\sim i\}}f\right|^q\right)^{\frac{1}{q}}
\end{align*}
which gives the desired result.
\end{proof}

\begin{lemma}
\label{lem3}
Suppose the local specification  $\{\mathbb{E}_{\Lambda}^{\omega}\}_{\Lambda\subset\subset \Z^N,\omega \in
\Omega}$ satisfies {\rm \textbf{(H0)}} and {\rm \textbf{(H1)}}.  Then, for sufficiently small $J_0$, there exist constants $D >0$ and $\eta\in(0,1)$ such that
\begin{equation}
\label{lem3ineq}
\nu\left\vert \nabla_{\Gamma_k}(\mathbb{E}_{\Gamma_{l}}|f|^q)^\frac{1}{q}
\right\vert^q \leq D\nu\left\vert \nabla_{\Gamma_k} f
\right\vert^q+\eta\nu\left\vert \nabla_{\Gamma_l} f
\right\vert^q\end{equation} 
for $k,l\in\{0,1\}, k\neq l$. 
\end{lemma}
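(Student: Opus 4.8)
Lemma \ref{lem3} is the "nonlinear" analogue of **Lemma \ref{lem1}**: where \ref{lem1} controlled the gradient of the conditional expectation $\mathbb{E}_{\Gamma_l}f$, here we must control the gradient of $(\mathbb{E}_{\Gamma_l}|f|^q)^{1/q}$. Let me think about how these two fit together and what the proof should look like.

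The key structural point is the set-up: $\Gamma_0$ and $\Gamma_1$ are chosen so that no two points in the same $\Gamma_k$ are nearest neighbours ($dist(i,j)>1$ for $i,j\in\Gamma_k$). This means $\mathbb{E}_{\Gamma_l}$ factorises into a product of single-site measures, and the interaction terms $V(x_i,x_j)$ only couple an $i\in\Gamma_k$ to its neighbours, which all lie in $\Gamma_l$. So for $i\in\Gamma_k$, the gradient $\nabla_i$ acting on $\mathbb{E}_{\Gamma_l}(\cdots)$ — here $\mathbb{E}_{\Gamma_l}$ integrates over the neighbours of $i$ — needs both the direct term (gradient hits $f$) and the interaction term (gradient hits the density, producing $W_i$).

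For Lemma \ref{lem1} the computation used Leibniz: $\nabla_i \mathbb{E}_{\{\sim i\}}f = \mathbb{E}_{\{\sim i\}}\nabla_i f + \mathbb{E}_{\{\sim i\}}(f\mathcal{U}_i)$ where $\mathcal{U}_i = W_i - \mathbb{E}_{\{\sim i\}}W_i$, then controlled the covariance term via the spectral gap and \textbf{(H1)}. The extra $|f|^{q}$ and $(\cdot)^{1/q}$ wrapper is exactly what Lemma \ref{lem2} — the weighted covariance estimate $\mathbb{E}_{\{\sim i\}}(|f|^q; W_i)\le (\mathbb{E}_{\{\sim i\}}f^q)^{1/p}(\kappa\mathbb{E}_{\{\sim i\}}|\nabla_{\{\sim i\}}f|^q)^{1/q}$ — is built to absorb.

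Here is how I would prove it:

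\begin{proof}[Proof sketch]
Take $k=1$, $l=0$; the other case is symmetric. Fix $i\in\Gamma_1$. Writing $g:=(\mathbb{E}_{\Gamma_0}|f|^q)^{1/q}$, we first reduce $\mathbb{E}_{\Gamma_0}$ to $\mathbb{E}_{\{\sim i\}}$ as in Lemma \ref{lem1}, since only the neighbours of $i$ interact with the $i$-coordinate, so that $\nabla_i g$ depends on $f$ only through $\mathbb{E}_{\{\sim i\}}|f|^q$. Differentiating the $q$-th root,
\[
\nabla_i\left(\mathbb{E}_{\{\sim i\}}|f|^q\right)^{1/q} = \frac{1}{q}\left(\mathbb{E}_{\{\sim i\}}|f|^q\right)^{1/q - 1}\nabla_i\left(\mathbb{E}_{\{\sim i\}}|f|^q\right),
\]
and by Leibniz applied to the Gibbs density,
\[
\nabla_i\left(\mathbb{E}_{\{\sim i\}}|f|^q\right) = \mathbb{E}_{\{\sim i\}}\nabla_i|f|^q - \mathbb{E}_{\{\sim i\}}\left(|f|^q\,\mathcal{U}_i\right),
\]
with $\mathcal{U}_i = W_i - \mathbb{E}_{\{\sim i\}}W_i$ as before, so the second term is precisely the centred covariance $-\mathbb{E}_{\{\sim i\}}(|f|^q; W_i)$ up to the factor $J_0$ coming from the coupling constants. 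The first (direct) term is bounded by Jensen, $|\mathbb{E}_{\{\sim i\}}\nabla_i |f|^q| \le q\,\mathbb{E}_{\{\sim i\}}(|f|^{q-1}|\nabla_i f|)$, and after dividing by $(\mathbb{E}_{\{\sim i\}}|f|^q)^{1-1/q}$ and a Hölder step this yields a clean $\mathbb{E}_{\{\sim i\}}|\nabla_i f|^q$ contribution with a bounded constant. The interaction term is exactly where Lemma \ref{lem2} enters: substituting the covariance bound and cancelling the factor $(\mathbb{E}_{\{\sim i\}}|f|^q)^{1/p}$ against the prefactor $(\mathbb{E}_{\{\sim i\}}|f|^q)^{1/q - 1}$, we obtain a term bounded by $c\, J_0^q\,\kappa\,\mathbb{E}_{\{\sim i\}}|\nabla_{\{\sim i\}}f|^q$. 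Raising everything to the $q$-th power (with an elementary $2^{q-1}$ inequality to split the two contributions), summing over $i\in\Gamma_1$, and applying $\nu\mathbb{E}_{\{\sim i\}} = \nu$ together with the observation that $\nabla_{\{\sim i\}}f$ for $i\in\Gamma_1$ only involves coordinates in $\Gamma_0$ gives
\[
\nu\left|\nabla_{\Gamma_1}\left(\mathbb{E}_{\Gamma_0}|f|^q\right)^{1/q}\right|^q \le D\,\nu\left|\nabla_{\Gamma_1}f\right|^q + \eta\,\nu\left|\nabla_{\Gamma_0}f\right|^q,
\]
where $D$ depends only on $q$ and $\eta$ carries the factor $J_0^q$. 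Choosing $J_0$ small enough forces $\eta<1$, as required.
\end{proof}

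The main obstacle is the bookkeeping around the $q$-th root: unlike Lemma \ref{lem1}, the chain rule introduces the weight $(\mathbb{E}_{\{\sim i\}}|f|^q)^{1/q-1}$, and one must check that this weight pairs correctly with the $(\mathbb{E}_{\{\sim i\}}f^q)^{1/p}$ factor delivered by Lemma \ref{lem2} so that the $f$-dependence cancels cleanly and only a genuine gradient term survives — this is exactly why Lemma \ref{lem2} was stated with that precise exponent structure. The Hölder/Young steps needed to convert $\mathbb{E}_{\{\sim i\}}(|f|^{q-1}|\nabla_i f|)$ into $(\mathbb{E}_{\{\sim i\}}|f|^q)^{1/p}(\mathbb{E}_{\{\sim i\}}|\nabla_i f|^q)^{1/q}$ are routine but must be done carefully to keep all constants uniform in $\omega$.
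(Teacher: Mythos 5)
Your proposal is correct and follows essentially the same route as the paper's own proof: reduction of $\mathbb{E}_{\Gamma_0}$ to $\mathbb{E}_{\{\sim i\}}$, the chain rule producing the weight $\left(\mathbb{E}_{\{\sim i\}}f^q\right)^{\frac{1}{q}-1}$, H\"older for the direct term $q\,\mathbb{E}_{\{\sim i\}}\left(f^{q-1}\nabla_i f\right)$, and Lemma \ref{lem2} for the covariance term, with precisely the exponent cancellation you identify (since $\frac{1}{q}-1 = -\frac{1}{p}$), followed by the $2^{q-1}$ splitting, summation over $i\in\Gamma_1$ using that $\{\sim i\}\subset\Gamma_0$, and taking $J_0^q < \frac{q^q}{2^{q+1}\kappa}$ to force $\eta<1$. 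The paper even arrives at the same constants in spirit, $D=2^{q-1}$ and $\eta = \frac{2^{q+1}}{q^q}\kappa J_0^q$, so there is nothing to add.
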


\begin{proof} 
Again we may suppose $f\geq0$.  For $k=1,l=0$ (the other case is similar),  we can write 
\begin{align}
\label{lem3-1}
\nu\left\vert \nabla_{\Gamma_1}(\mathbb{E}_{\Gamma_0}  f^q)^{\frac{1}{q}} \right\vert
^q &\leq \nu\sum_{i\epsilon\Gamma_1}\left\vert \nabla_i (\mathbb{E}_{\{\sim i\}}f^q)^{\frac{1}{q}}\right\vert ^q \nonumber \\ 
&= \nu\sum_{i\epsilon\Gamma_1} \frac{1}{q^q}(\mathbb{E}_{\{\sim i\}}f^q)^{-\frac{q}{p}}  \left\vert\nabla_i( \mathbb{E}_{\{\sim i\}}  f^q)\right\vert ^q.
\end{align}

We will compute the terms in the sum on the right hand side of \eqref{lem3-1}.
For $i \in \Gamma_1$, we have
\begin{align*}
\nabla_i (\mathbb{E}_{\{\sim i\}} f^q) &=  q(\mathbb{E}_{\{\sim i\}} f^{q-1} \nabla_i f )-\sum_{ j\in\{\sim i\}}J_{i,j}\E_{\{\sim i\}}\left(f^q; \nabla_iV(x_i, x_j)\right) \\
\Rightarrow \left|\nabla_i (\mathbb{E}_{\{\sim i\}} f^q)\right| 
&\leq q\left( \E_{\{\sim i\}} f^q\right)^{1/p} \left( \E_{\{\sim i\}}|\nabla_if|^q\right)^{1/q} + J_0\left|\E_{\{\sim i\}}\left(f^q; W_i\right)\right|,
\end{align*}
so that
\begin{align*}
 \left|\nabla_i (\mathbb{E}_{\{\sim i\}} f^q)\right|^q &\leq 2^{q-1}q^q\left( \E_{\{\sim i\}} f^q\right)^{\frac{q}{p}} \left( \E_{\{\sim i\}}|\nabla_if|^q\right)\\
&\qquad + 2^{q-1}J^q_0\left|\E_{\{\sim i\}}\left(f^q; W_i\right)\right|^q,
\end{align*}
where $W_i = \sum_{j\in\{\sim i\}} \nabla_iV(x_i, x_j)$ as above.

We can use Lemma \ref{lem2} to bound the correlation in the second term.  Indeed, this gives
\begin{align*}
\left|\nabla_i (\mathbb{E}_{\{\sim i\}} f^q)\right|^q &\leq  \left(\E_{\{\sim i\}} f^q\right)^{\frac{q}{p}} \left(2^{q-1}q^q\E_{\{\sim i\}}|\nabla_if|^q + 2^{q-1}\kappa J^q_0\E_{\{\sim i\}}\left|\nabla_{\{\sim i\}} f\right|^q\right).
\end{align*}

Using this in \eqref{lem3-1} yields
\begin{align*}
\nu \left\vert \nabla_{\Gamma_1}(\mathbb{E}_{\Gamma_0}  f^q)^{\frac{1}{q}} \right\vert
^q &\leq \nu\sum_{i\in\Gamma_1}\left(2^{q-1}\E_{\{\sim i\}}|\nabla_if|^q + \frac{2^{q-1}}{q^q}\kappa J^q_0\E_{\{\sim i\}}\left|\nabla_{\{\sim i\}} f\right|^q\right) \\
& = 2^{q-1} \nu\left|\nabla_{\Gamma_1} f\right|^q + \frac{2^{q-1}}{q^q}\kappa J_0^q\nu\sum_{i\in\Gamma_1}\left|\nabla_{\{\sim i\}} f\right|^q \\
& = 2^{q-1} \nu\left|\nabla_{\Gamma_1} f\right|^q + \frac{2^{q+1}}{q^q}\kappa J_0^q\nu\left|\nabla_{\Gamma_0} f\right|^q.
\end{align*}
Finally, taking $J^q_0<\frac{q^q}{2^{q+1}\kappa}$ we see that
\[
\nu\left\vert \nabla_{\Gamma_1}(\mathbb{E}_{\Gamma_0}  f^q)^{\frac{1}{q}} \right\vert
^q \leq D\nu \left|\nabla_{\Gamma_1} f\right|^q + \eta\nu\left|\nabla_{\Gamma_0} f\right|^q,
\]
where $D=2^{q-1}$ and $\eta= \frac{2^{q+1}}{q^q}\kappa J_0^q <1$, as required.
\end{proof}

 \begin{lemma}
 \label{lem4}
 Suppose the local specification  $\{\mathbb{E}_{\Lambda}^{\omega}\}_{\Lambda\subset\subset \Z^N,\omega \in\Omega}$ satisfies {\rm \textbf{(H0)}} and {\rm \textbf{(H1)}}.  Then $\mathcal{P}^nf$ converges
 $\nu$-almost everywhere to $\nu f$,  where we recall that $\mathcal{P}=\mathbb{E}_{\Gamma_1}\mathbb{E}_{\Gamma_{0}}$.  In particular, $\nu$ is unique.
 \end{lemma}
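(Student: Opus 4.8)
The plan is to exploit the gradient contraction of Lemma \ref{lem1} together with the structural fact that after one application of $\mathcal{P}$ the resulting function no longer depends on the $\Gamma_1$-coordinates. First I would observe that, since $\Gamma_0$ and $\Gamma_1$ contain no nearest-neighbour pairs, $\mathbb{E}_{\Gamma_0}$ is the (infinite) product of the single-site measures conditioned on the complementary coordinates, so that $\mathbb{E}_{\Gamma_0}g$ is $\sigma(\omega_{\Gamma_1})$-measurable and $\mathbb{E}_{\Gamma_1}\mathbb{E}_{\Gamma_0}g$ is $\sigma(\omega_{\Gamma_0})$-measurable. Consequently $\nabla_{\Gamma_1}\mathcal{P}^m f=0$ for every $m\geq1$, and so $\nu|\nabla\mathcal{P}^m f|^q=\nu|\nabla_{\Gamma_0}\mathcal{P}^m f|^q=:a_m$. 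Writing $g=\mathbb{E}_{\Gamma_0}\mathcal{P}^m f$ (which satisfies $\nabla_{\Gamma_0}g=0$) and applying Lemma \ref{lem1} twice -- once with $(k,l)=(0,1)$ to $g$ and once with $(k,l)=(1,0)$ to $\mathcal{P}^m f$ -- the vanishing gradient terms drop out and yield the recursion $a_{m+1}\leq\tilde\eta^2 a_m$. Since $\tilde\eta<1$ this gives $a_m\leq\tilde\eta^{2(m-1)}a_1$, so that $\nu|\nabla\mathcal{P}^m f|^q\to0$ geometrically fast.

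Next I would upgrade this gradient decay to convergence of the sequence itself. Decompose
\begin{equation*}
\mathcal{P}^m f-\mathcal{P}^{m+1}f=\left(\mathcal{P}^m f-\mathbb{E}_{\Gamma_0}\mathcal{P}^m f\right)+\left(\mathbb{E}_{\Gamma_0}\mathcal{P}^m f-\mathbb{E}_{\Gamma_1}\mathbb{E}_{\Gamma_0}\mathcal{P}^m f\right).
\end{equation*}
Because $\Gamma_0,\Gamma_1$ carry no internal interactions, $\mathbb{E}_{\Gamma_0}$ and $\mathbb{E}_{\Gamma_1}$ are product measures of single-site measures; by \textbf{(H0)}, Remark \ref{tensorisation}(i) and (iii) they satisfy an $(SG_q)$ inequality with a uniform constant $c_0$. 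Applying this spectral gap to each bracket, and using $\nu\mathbb{E}_{\Gamma_k}=\nu$, I can bound both terms by $c_0$ times gradients already controlled by $a_m$ and $\tilde\eta a_m$, giving $\nu|\mathcal{P}^m f-\mathcal{P}^{m+1}f|^q\leq C\tilde\eta^{2(m-1)}$. Hence $\sum_m\big(\nu|\mathcal{P}^{m+1}f-\mathcal{P}^m f|^q\big)^{1/q}<\infty$, so $(\mathcal{P}^m f)$ is Cauchy in $L^q(\nu)$; the summability of the corresponding $L^1(\nu)$-norms together with the Borel--Cantelli lemma gives $\nu$-almost everywhere convergence to some limit $f_\infty$.

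Finally I would identify the limit and deduce uniqueness. Since each $\mathbb{E}_{\Gamma_k}$ preserves $\nu$ (the DLR equations), $\nu(\mathcal{P}^m f)=\nu f$ for all $m$, so for bounded $f$ dominated convergence gives $\nu f_\infty=\nu f$. On the other hand $\nabla\mathcal{P}^m f\to0$ in $L^q(\nu)$ while $\mathcal{P}^m f\to f_\infty$, so by closability of the sub-gradient $\nabla f_\infty=0$; since the horizontal fields $X_1,X_2$ connect any two points of $\He$ (Chow's theorem), a function with vanishing sub-gradient is constant, whence $f_\infty=\nu f$. For uniqueness I note that every step above used only the DLR property and \textbf{(H0)}--\textbf{(H1)}, so the same argument applied to any Gibbs measure $\tilde\nu$ produces the genuine constant $\tilde\nu f$ as the limit of the deterministic sequence $\mathcal{P}^m f$; as the limit is a true constant it cannot depend on the measure, forcing $\nu f=\tilde\nu f$ on all bounded local $f$ and hence $\nu=\tilde\nu$.

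I expect the main obstacle to lie precisely in this last step: rigorously passing from $\nabla\mathcal{P}^m f\to0$ in $L^q(\nu)$ to $\nabla f_\infty=0$ (closability of $\nabla$ on $L^q(\nu)$) and from there to $f_\infty$ being a genuine constant rather than merely $\nu$-almost everywhere constant -- it is here that the sub-Riemannian connectivity of $\He$ is essential -- together with the measure-independence needed to turn the convergence into a uniqueness statement. Some care is also required simply in making sense of the conditional expectations $\mathbb{E}_{\Gamma_0},\mathbb{E}_{\Gamma_1}$ over the infinite sets $\Gamma_0,\Gamma_1$, which is exactly what the product structure of these measures guarantees.
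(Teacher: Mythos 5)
Your proposal follows essentially the same route as the paper's proof: the same telescoping decomposition of $\mathcal{P}^m f-\mathcal{P}^{m+1}f$ bounded via the tensorised $(SG_q)$ inequality with constant $c_0$, the same repeated application of Lemma \ref{lem1} combined with $\nabla_{\Gamma_1}\mathcal{P}^m f=0$ to extract the geometric factor $\tilde\eta^{2m}$, and the same Cauchy-in-$L^q(\nu)$ plus Borel--Cantelli passage to $\nu$-a.s.\ convergence with the limit identified as the constant $\nu f$. The two points you flag as delicate (passing from vanishing sub-gradient to constancy of the limit, and the measure-independence underlying uniqueness) are exactly the steps the paper itself treats tersely by deferring to \cite{G-Z}, so your extra care there elaborates rather than departs from the paper's argument.
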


 \begin{proof}
 
\noindent  
We will follow \cite{G-Z}. We have 
 \begin{align}\nonumber\nu\left|f- \mathbb{E}_{\Gamma_{1}}\mathbb{E}_{\Gamma_{0}} f\right|^q&\leq 2^{q-1}\nu\mathbb{E}_{\Gamma_{0}}\left|f- \mathbb{E}_{\Gamma_{0}} f\right|^q+2^{q-1}\nu\mathbb{E}_{\Gamma_{1}}\left|\mathbb{E}_{\Gamma_{0}}f- \mathbb{E}_{\Gamma_{1}}\mathbb{E}_{\Gamma_{0}} f\right|^q\\ &
 \leq2^{q-1}c_0\nu\left\vert \nabla_{\Gamma_0} f
\right\vert^q+2^{q-1}c_0\nu\left\vert \nabla_{\Gamma_1}(\mathbb{E}_{\Gamma_{0}}f
)\right\vert^q,
\end{align}           
since by \textbf{(H0)} and Remark \ref{tensorisation} both the measures $\mathbb{E}_{\Gamma_{0}}$ and $\mathbb{E}_{\Gamma_{1}}$ satisfy the $(SG_q)$ inequality with constant $c_0 = \frac{4c}{\log2}$ independant of the boundary conditions. If we use Lemma \ref{lem1} we get 
\[
\nu\left|f- \mathbb{E}_{\Gamma_{1}}\mathbb{E}_{\Gamma_{0}} f\right|^q \leq 2^{q-1}c_0\nu\left\vert \nabla_{\Gamma_0} f
\right\vert^q+2^{q-1}c_0(\tilde D\nu\left\vert \nabla_{\Gamma_1} f
\right\vert^q+\tilde\eta\nu\left\vert \nabla_{\Gamma_0} f
\right\vert^q)
\]
From the last inequality we obtain that for any   $n\in \mathbb{N}$,  
\begin{align*}
\nu\left|\mathcal{P}^nf- \mathcal{P}^{n+1} f\right|^q &\leq 2^{q-1}c_0\nu\left\vert \nabla_{\Gamma_0} \mathcal{P}^nf
\right\vert^q+2^{q-1}c_0\tilde\eta\nu\left\vert \nabla_{\Gamma_0} \mathcal{P}^nf
\right\vert^q \\
&= 2^{q-1}c_0(1+\tilde\eta)\nu\left\vert \nabla_{\Gamma_0} \mathcal{P}^nf\right\vert^2,
\end{align*}
using the fact that $\mathcal{P}^n$ does not depend on coordinates in $\Gamma_1$ by definition, so that $\nabla_{\Gamma_1}\mathcal{P}^n = 0$.  By repeated applications of Lemma \ref{lem1} we see that,

\begin{align*}
\nu\left|\mathcal{P}^nf- \mathcal{P}^{n+1} f\right|^q &\leq 2^{q-1}c_0(1+\tilde\eta)\tilde\eta^{2n-1}\nu\left|\nabla_{\Gamma_1}\E_{\Gamma_0}f\right|^q \\
&\leq 2^{q-1}c_0(1+\tilde\eta)\tilde\eta^{2n-1}\left(\tilde D\nu\left|\nabla_{\Gamma_1}f\right|^q + \tilde\eta\nu\left|\nabla_{\Gamma_0}f\right|^q\right).
\end{align*}

Since $\tilde\eta<1$, this clearly tends to zero as $n\to\infty$, so that the sequence $\{\mathcal{P}^n\}$ is Cauchy in $L^q(\nu)$.  Moreover, by the Borel-Cantelli lemma, the sequence
 $$\{ \mathcal{P}^n f-\nu \mathcal{P}^n f\}_{n \in \mathbb{N}}$$  
 converges $\nu-a.s$. The limit of $\mathcal{P}^n f-\nu \mathcal{P}^n f=\mathcal{P}^n f-\nu  f$    is therefore constant and hence identical to zero. 
 \end{proof}

 \subsection{Proof of Theorem \ref{thm1} }

\begin{proof}
Recall that we want to extend the $(LS_q)$ inequality from the single-site measures $\mathbb{E}_{i}^{\omega}$ to the Gibbs measure corresponding to the local specification $\{\mathbb{E}_{\Lambda}^{\omega}\}_{\Lambda\subset\subset \Z^2,\omega \in
\Omega}$ on the entire lattice (since we are taking $N=2$ for convenience).  As mentioned, to do so, we will follow the iterative method developed by B. Zegarlinski in  \cite{Ze2} and \cite{Ze}. 

Again without loss of generality, suppose $f\geq0$.  We can write 
 \begin{align}
 \label{thm1-1}
 \nonumber \nu \left(f^q \log\frac{f^q}{\nu f^q}\right)=&\nu\mathbb{E}_{\Gamma_0} \left(f^q \log\frac{f^q}{\mathbb{E}_{\Gamma_0} f^q}\right)+\nu\mathbb{E}_{\Gamma_{1}} \left(\mathbb{E}_{\Gamma_0}f^q \log\frac{\mathbb{E}_{\Gamma_0}f^q}{\mathbb{E}_{\Gamma_{1}}\mathbb{E}_{\Gamma_0} f^q}\right) \\ 
&\qquad + \nu \left(\mathbb{E}_{\Gamma_{1}}\mathbb{E}_{\Gamma_{0}}f^q \log\mathbb{E}_{\Gamma_{1}}\mathbb{E}_{\Gamma_{0}}f^q\right)- \nu\left(f^q \log \nu f^q\right).
\end{align}
As mentioned above, by \textbf{(H0)} and since the measures $\mathbb{E}_{\Gamma_0}$
and $\mathbb{E}_{\Gamma_1}$ are in fact product measures, we know that they both satisfy $(LS_q)$ with constant $c$ independent of the boundary conditions.  Using this fact in \eqref{thm1-1} yields

\begin{align}
\label{thm1-2}
\nonumber  \nu \left(f^q \log\frac{f^q}{\nu f^q}\right) \leq c\nu(\mathbb{E}_{\Gamma_0}&\left\vert \nabla_{\Gamma_0} f
\right\vert^q)+c\nu \mathbb{E}_{\Gamma_1}\left\vert \nabla_{\Gamma_1}(\mathbb{E}_{\Gamma_0} f^q)^{\frac{1}{q}}
\right\vert^q\\
& \qquad + \nu \left(\mathcal{P}f^q \log\mathcal{P}f^q\right)-\nu \left(f^q \log \nu f^q\right).
\end{align}
For the third term of \eqref{thm1-2} we can similarly write 
\begin{align}
\nonumber\nu \left(\mathcal{P}f^q \log \mathcal{P} f^q\right)&= \nu \mathbb{E}_{\Gamma_{0}}\left(\mathcal{P}f^q \log \frac{\mathcal{P} f^q}{\mathbb{E}_{\Gamma_{0}}\mathcal{P} f^q}\right)+\nu \mathbb{E}_{\Gamma_{1}}\left(\mathbb{E}_{\Gamma_{0}}\mathcal{P}f^q \log \frac{\mathbb{E}_{\Gamma_{0}}\mathcal{P} f^q}{\mathbb{E}_{\Gamma_{1}}\mathbb{E}_{\Gamma_{0}}\mathcal{P} f^q}\right)\\
& \qquad + \nonumber
\nu\left(\mathbb{E}_{\Gamma_{1}}\mathbb{E}_{\Gamma_{0}}\mathcal{P}f^q \log \mathbb{E}_{\Gamma_{1}}\mathbb{E}_{\Gamma_{0}}\mathcal{P}f^q \right).
\end{align}
If we use again the $(LS_q)$ inequality  for the measures $\mathbb{E}_{\Gamma_{k}},k=0,1$
we get
 \begin{equation}
 \label{thm1-3}
 \nu\left(\mathcal{P}f^q \log \mathcal{P} f^q\right)\leq c \nu\left\vert \nabla_{\Gamma_0}(\mathcal{P}f^q)^\frac{1}{q}
\right\vert^q+c \nu\left\vert \nabla_{\Gamma_1}(\mathbb{E}_{\Gamma_{0}} \mathcal{P}f^q)^\frac{1}{q}
\right\vert^q+\nu \left(\mathcal{P}^2f^q \log \mathcal{P}^2f^q\right).
\end{equation}
Working similarly for the last term $\nu \left(\mathcal{P}^2f^q \log \mathcal{P}^2f^q\right)$  of \eqref{thm1-3} and inductively for any term $\nu (\mathcal{P}^kf^q \log \mathcal{P}^kf^q)$, then after $n$ steps \eqref{thm1-2} and \eqref{thm1-3} will give
\begin{align}
\label{thm1-4}
\nonumber\nu \left(f^q \log\frac{f^q}{\nu f^q}\right) &\leq c \sum_{k=0}^{n-1} \nu \left\vert \nabla_{\Gamma_0}(\mathcal{P}^kf^q)^\frac{1}{q}\right\vert^q+c \sum_{k=0}^{n-1} \nu\left\vert \nabla_{\Gamma_1}(\mathbb{E}_{\Gamma_{0}} \mathcal{P}^kf^q)^\frac{1}{q}\right\vert^q \\
& \qquad + \nu \left(\mathcal{P}^n f^q \log\mathcal{P}^n f^q\right)-\nu \left(f^q \log \nu f^q\right).
 \end{align}
 
In order to deal with the first and second term on the right-hand side of \eqref{thm1-4} we will use Lemma \ref{lem3}.    If we apply inductively relationship \eqref{lem3ineq}, for any $k\in\mathbb{N}$ we obtain
\begin{equation}
\label{thm1-5}
\nu\left\vert \nabla_{\Gamma_0}(\mathcal{P}^kf^q)^\frac{1}{q}
\right\vert^q \leq \eta^{2k-1}D\nu\left\vert \nabla_{\Gamma_1} f
\right\vert^q+\eta^{2k}\nu\left\vert \nabla_{\Gamma_0} f
\right\vert^q
\end{equation}
and
\begin{equation}
\label{thm1-6}
\nu\left\vert \nabla_{\Gamma_1}(\mathbb{E}_{\Gamma_{0}}\mathcal{P}^kf^q)^\frac{1}{q}
\right\vert^q \leq \eta^{2k}D\nu\left\vert \nabla_{\Gamma_1} f
\right\vert^q+\eta^{2k+1}\nu\left\vert \nabla_{\Gamma_0} f
\right\vert^q.
\end{equation}

Using \eqref{thm1-5} and \eqref{thm1-6} in \eqref{thm1-4} we see that
\begin{align}
\label{thm1-7}
\nonumber\nu \left(f^q \log\frac{f^q}{\nu f^q}\right)& \leq cD\left(\eta^{-1}  + 1\right)\left(\sum_{k=0}^{n-1}\eta^{2k}\right)\nu\left\vert \nabla_{\Gamma_1} f\right\vert^q \\
& \qquad + c\left(1+\eta\right)\left(\sum_{k=0}^{n-1}\eta^{2k}\right)\nu\left\vert \nabla_{\Gamma_0} f\right\vert^q \nonumber \\ 
& \qquad + \nu \left(\mathcal{P}^n f^q \log \mathcal{P}^n f^q\right)-\nu (f^q \log \nu f^q).
\end{align}

By Lemma \ref{lem4} we have that $\lim_{n\to \infty}\mathcal{P}^nf^q=\nu f^q$,  $\nu-a.s$.   Therefore, taking the limit as $n\to\infty$ in \eqref{thm1-7} yields
\[
\nu \left(f^q \log\frac{f^q}{\nu f^q}\right) \leq cD\left(\frac{1}{\eta}+1\right)K\nu\left\vert \nabla_{\Gamma_1} f
\right\vert^q+c(1 + \eta)K\nu\left\vert \nabla_{\Gamma_0} f
\right\vert^q
\]
 where $K=\sum_{k=0}^{\infty}\eta^{2k} = \frac{1}{1-\eta^2}$ for  $\eta<1$.  Hence
 \[
\nu \left(f^q \log\frac{f^q}{\nu f^q}\right) \leq C\nu|\nabla f|^q
\]
for $C = \max\left\{ cD\left(\frac{1}{\eta}+1\right)K, c(1 + \eta)K\right\}$, as required.
\end{proof}

\end{document}